\newcommand{\bfR}{{\bf R}}
\newcommand{\bfN}{{\bf N}}
\newcommand{\al}{\alpha}
\newcommand{\be}{\beta}
\newcommand{\ep}{\epsilon}
\newcommand{\ga}{\gamma}
\newcommand{\de}{\delta}
\renewcommand{\d}{\displaystyle}
\newcommand{\rme}{{\rm e}}
\newtheorem{theorem}{Theorem}[section]
\newtheorem{proposition}{Proposition}[section]
\newtheorem{remark}{Remark}[section]
\newtheorem{definition}{Definition}[section]
\begin{document}
\title{\bf Invariant subspaces of biconfluent Heun operators and special solutions of
Painlev\'e IV
}
\author{ Yik-Man Chiang$^{1}$, Chun-Kong Law$^{2}$ and Guo-Fu Yu
 $^{3}$}
\date{24 May 2019}
\maketitle

\begin{abstract}
We show that there is a full correspondence between the parameters space of the degenerate biconfluent Heun connection (BHC) and that of Painlev\'{e} IV that admits special solutions. The BHC degenerates when either the Stokes' data  for the irregular singularity at $\infty$ degenerates or the regular singular point at the origin becomes an apparent singularity. We show that if the BHC is written as isomonodromy family of biconfluent Heun equations (BHE), then the BHE degenerates precisely when it admits eigen-solutions of the biconfluent Heun operators, after choosing appropriate accessory parameter, 
  of specially constructed invariant subspaces of finite dimensional solution spaces spanned by parabolic cylinder functions.  We have found all eigen-solutions over this parameter space apart from three exceptional cases  after choosing the right accessory parameters. These eigen-solutions are expressed as certain finite sum of parabolic cylinder functions.  We extend the above sum to new convergent series expansion in terms of parabolic cylinder functions to the BHE. The infinite sum solutions of the BHE terminates precisely when the parameters of the BHE assumes the same values as those of the degenerate biconfluent Heun connection except at three instances after choosing the right accessory parameter.
\end{abstract}
Keywords: Biconfluent Heun connection/equation; parabolic cylinder functions; invariant subspaces; Painlev\'{e} IV\\
Mathematics Subject Classification (2010). 33E10, 34M35 (primary), 33E17 (secondary)
\vskip-.3cm

 \footnote{$^{1}$Department of Mathematics, The Hong Kong University of
Science and Technology, Hong Kong SAR. Email: machiang@ust.hk}
 \footnote{$^{2}$Department of Applied Mathematics,
 National Sun Yat-sen University,  Kaohsiung 804, Taiwan. Email :
 law@math.nsysu.edu.tw}
 \footnote{$^{3}$Department of Mathematics, Shanghai Jiao-Tong University,
Shanghai 200240, P.R.\ China. Email: gfyu@sjtu.edu.cn}


\section{Introduction}
\setcounter{equation}{0}
 The canonical Biconfluent Heun equation ({BHE}) (\cite{DDMRR}, \cite{maroni_a}, \cite{maroni_b}, \cite{ronv}) is written as
 \begin{equation}\label{E:BHEC}
 	x y''+(1+\alpha-\beta x -2x^2)y'+[(\gamma-\alpha-2)x-\frac{1}{2}(\delta+(1+\alpha)\beta)]y=0.
\end{equation}
where $\alpha,\, \beta,\, \gamma,\, \delta$ are parameters.
The equation is characterized by having one regular point at $x=0$ and one irregular
 singular point of rank two at $x=\infty$ which is a result of coalesce of three regular singular points of the (Fuchsian-type) Heun equation at $x=\infty$ \cite[p. 61]{wasow}.

{A standard Frobenius argument shows that}
 $$ \lim_{x\to 0} y \sim
 x^{k},
 $$
where the indices $k$ take the value $k=0, 1-c$. This illustrates that the local monodromy representation at $x=0$ is given by
	\[
	  \begin{pmatrix}
	     	0 & 0 \\
	     	0 & 1-c
	  \end{pmatrix}.
	\]

The \eqref{E:BHEC} is also known as the \textit{rotating harmonic oscillator} (e.g., \cite{nieto_gutschick}),  appeared in the second paper of the series of fundamental work \cite[\S4, (46)]{schrodinger} on classical quantum mechanics by Schr\"odinger in 1926${}^\dagger$\footnote{${}^\dagger$Schr\"odinger gave an approximation on the eigenvalues (i.e., the accessory parameters) of a special case of the BHE.}. Despite the long history of {BHE} and its frequent encounters in different branches of mathematical physics (e.g., \cite{bender}, \cite{hinterleitner}, \cite{masson}, \cite{nieto_gutschick}, \cite{petersen}),
 relatively little is known about its solutions  \cite{ronv} and the accessory parameter $\delta$ \cite{CY}. The main obstacle to better understanding the {BHE} appears that its being \textit{non-rigid} \cite{Arinkin} in the generic consideration. With the identification $\beta=2t$, and
	\[
		2\theta_0=1+\alpha,
		\quad
		2\theta_\infty=1+\gamma,
		\quad
		2(\theta_\infty-\theta_0)=\gamma-\alpha,
	\]
one can derive the BHE \eqref{E:BHEC}, via the well-known formula \eqref{E:conversion},
	\begin{equation}\label{E:BHE_JM}
			xy^{\prime\prime}+(2\theta_0-2tx-2x^2)y^\prime+ \big(2(\theta_\infty-\theta_0-1)x+4\theta_0(\lambda-t)\big)y=0,
\end{equation}
 from the Biconfluent Heun-type connection (BHC) (see Definition \ref{D:BHC-criteria}) over the rank two trivial vector bundle on the Riemann sphere $\mathbb{CP}^1$ with punctures at $z=0,\, \infty$,
  \begin{equation}\label{E:BHC}
  		\frac{d\Psi}{dx}= \Big({A}\,{x}+B+\frac{C}{x}\Big)\Psi
  		:=\mathcal{A}\,\Psi,
  	\end{equation}
where the matrices $A,\, B,\, C$ are normalised by Jimbo and Miwa in \cite[Appendix C]{JM_1981b}
	\begin{equation}\label{E:BHC-matrices}
		A=
		\begin{pmatrix}
			1 & 0 \\
			0 & -1
		\end{pmatrix},\quad
		B=
		\begin{pmatrix}
			t & u \\
			2(z-\theta_0-\theta_\infty)/u & -t
		\end{pmatrix}
		,\quad
		C=
		\begin{pmatrix}
		-z+\theta_0 & -uy/2 \\
		2z(z-2\theta_0)/uy & z-\theta_0
		\end{pmatrix},
	\end{equation}
where the matrix $C$ has eigenvalues $\pm\theta_0$ and so the local monodromy of the connection around $x=0$ or $x=\infty$, up to a conjugacy class, is given by
	\[
		T_0^{(0)}:=
		\begin{pmatrix}
			\theta_0 & 0\\
			0 & -\theta_0
		\end{pmatrix}.
	\]
In a different connection, Garnier showed \cite{garnier}, similar to Fuchs' argument \cite{fuchs} of Heun's equation and Painlev\'e VI,  that one could obtain $\mathrm{P_{IV}}$ equation via isomonodromy deformation from the {BHE}. Schlesinger \cite[(1912)]{Schlesinger} extended earlier works to differential equations in system forms with arbitrary number of regular singular points. D. V.  Chudnovsky, and  G. V. Chudnovsky \cite{chudnovsky}, and independently Jimbo, Miwa and Ueno \cite{JM_1980,JM_1981b, JMU_1981}, amongst other things, extended Schlesinger's work to differential equations with irregular singular points.

Let
	\begin{equation}\label{E:2nd-eqn}			\mathcal{B}:=Ax +B-tA=A(x-t)+B.
	\end{equation}
Then the compatibility (integrability) condition for isomonodromy deformation of \eqref{E:BHC}
	\begin{equation}\label{E:compatibility}
		\Omega=\mathcal{A}\,dx+\mathcal{B}\,dt,
		\qquad
		d\Omega=\Omega\wedge \Omega,
	\end{equation}
gives rise to Painlev\'e IV:
\begin{equation}\label{E:P4}
		\frac{d^2y}{dt^2}=\frac{1}{2y}\Big(\frac{dy}{dt}\Big)^2+\frac32y^3+4ty^2+2\big(t^2-\xi\big)y+\frac{\eta}{y}.
	\end{equation}
	
Indeed, one can derive the BHE \eqref{E:BHE_JM} as a member of the isomonodromic
deformation \eqref{E:BHC} with \eqref{E:BHC-matrices} and \eqref{E:compatibility}. To do this, one blows up the $zy-$plane at
the origin and \eqref{E:BHE_JM} appears to be the member at the exceptional point
\[
z=0,\quad y=0,\quad z/y=\lambda.
\]	
	
An important discovery by Okamoto \cite{okamoto} on Painlev\'e IV
is that the $\mathrm{P}_\mathrm{IV}$ admits special function solutions that can be written in terms of parabolic cylinder functions when	
	\begin{equation}\label{E:degeneration-set}
		\eta=-2(2n+1+\varepsilon \xi)^2,
		\quad
		\mathrm{and/or}
		\quad
		\eta=-2n^2,
		\quad
		n\in\mathbb{Z},
	\end{equation}
where $\varepsilon=\pm 1$.  The equations \eqref{E:degeneration-set} become more transparent${}^\dagger$\footnote{${}^\dagger$ The authors are unable to find a suitable reference for the \eqref{E:degeneration-set_JM}.}
\begin{equation}\label{E:degeneration-set_JM}
		\theta_0\pm \theta_\infty\in \mathbb{Z}\qquad\mathrm{and/or}\qquad 2\,\theta_0\in \mathbb{Z},
	\end{equation}
when written in terms of Jimbo-Miwa's convention \cite{JM_1981b}:
\begin{equation}\label{E:matching-coeff}
		\xi=2\,\theta_\infty-1,\qquad \eta=-8\,\theta_0^2.
	\end{equation}


We mention that if the condition ``and" in \eqref{E:degeneration-set} holds, then each parabolic  cylinder function in the corresponding special solutions further reduces to a Hermite polynomial and so these special solutions of $P_\mathrm{IV}$ are rational functions  written in terms of Hermite polynomials. Okamoto also found that the above set of special parameters are connected to the affine Weyl group of the type $\tilde{A}_2^{(1)}$ (see \cite{noumi2004}) which acts as the symmetry group of $P_\mathrm{IV}$ by way of B\"acklund transformations  \cite{noumi2004, noumi-yamada-1998, noumi-yamada-1999, okamoto}. We would like to point out that the special solutions written in terms of parabolic cylinder functions above to lie in the Picard-Viessot extension of the parabolic differential operator $L=\partial^2+(x^2+\alpha)$ for an appropriately chosen $\alpha\not=0$. Okamoto found another set of special rational solutions for $P_\mathrm{IV}$ when the corresponding $\xi,\, \eta$ satisfies
	\[	
		\xi=m,\quad
		\eta=-2\big(2n-m+\frac13)^2\big),
	\]
respectively, $\theta_0,\, \theta_\infty$ satisfy
	\[	
		\theta_0\pm \theta_\infty=\mp \big(n\pm\frac12+\frac16\big)
		=\begin{cases}
			-\big(n+\displaystyle\frac23\big)\\
			n-\displaystyle\frac13,
		\end{cases}
		\quad n\in \mathbb{Z}
	\]
or $3(\theta_0\pm \theta_\infty)\equiv 1,\, 2\  \mod 3$. This arithmetic relations amongst the $\xi,\, \eta$ (resp.   $\theta_0,\, \theta_\infty$) represent monodromy/Stokes multipliers that is \textit{incompatible} with those listed in \eqref{E:degeneration-set} and  hence the $\tilde{A}_2^{(1)}$,  so they fall outside the scope of consideration of this paper.

This paper aims to illustrate the following objectives:
	\begin{enumerate}
		\item[(i)] The monodromy/Stokes multipliers of the BHC \eqref{E:BHC} degenerate either when the differential Galois group of the BHC becomes solvable or the regular singular point at the origin becomes an apparent singularity, i.e., the monodromy at the origin becomes trivial when the $\theta_0,\, \theta_\infty$ satisfy exactly the criteria \eqref{E:degeneration-set}. So both the BHC and Painlev\'e IV degenerate at  \textit{exactly the same arithmetic relations on} $\theta_0,\, \theta_\infty$. That is, there is a \textit{complete correspondence} between the degeneration of monodromy/Stokes multipliers of the {BHE} as a connection, i.e., a biconfluent Heun connection (BHC), and the parameter space when Painlev\'e IV admits special solutions as characterised by Okamoto \cite{okamoto}, Noumi and Yamada \cite{noumi-yamada-1999, noumi2004}.   Moreover, we point out that these special solutions of Painlev\'e IV lie in the Picard-Viessot extension of $\partial^2+(x^2+\alpha)$ for some non-zero $\alpha$ (Theorem \ref{T:BHC-2}, and Theorem \ref {T:BHC-degenerate-3}),
	\item[(ii)] We sometimes adopt another set of parameters and write the \textit{general form of} BHE as
	\begin{equation}\label{E:BHE}
 	z y''(z)+(-2z^2+bz+c)y'(z)+(d+ez)y(z)=0.
	\end{equation}
where $b,c,d,e$ are parameters so that $(\al,\be,\ga,\de)=(c-1,-b,e+c+1,bc-2d)$. We show that the eigen-solutions to the {BHE} can assume the form
 		\begin{equation}\label{E:hautot_sum_0}
		y(x)=e^{x^2/4}\sum_{k=0}^N A_k D_{\frac{e}{2}-k}(x)=e^{x^2/4}\sum_{k=0}^N A_k D_{(\theta_\infty-\theta_0-1)-k}(x),\quad x=(b-2z)/\sqrt{2},
	\end{equation}
where  $e=(\gamma-\alpha-2)/2$ and the $D_\nu(x)$ is the \textit{parabolic cylinder function} (see Appendix B), first given by  Hautot \cite{hautot_1969}, \cite{hautot_1971} lie in certain invariant subspace $\mathcal{I}_N$ (see \S\ref{S:construction-subspace}) of dimension $N+1$ with respect to the BHE characterised by Picard-Viessot extension of $\partial^2+(x^2+\alpha)$, after choosing appropriate accessory parameters (Theorem \ref{E:eigen-equation}), at exactly the same monodromy/Stokes multipliers mentioned in (i) \textit{except at three cases}, and hence we provide an ``almost complete" correspondence between invariant subspaces of the {BHE} and the well-known special solutions of Painlev\'e IV equation again as characterised by Okamoto \cite{okamoto}, Noumi and Yamada \cite{noumi-yamada-1999, noumi2004}.

When the parameter $\alpha$ in BHE \eqref{E:BHEC}  $\alpha+1$ becomes an non-positive integer $-N\le0 $, then
we derive a second solution to \eqref{E:BHEC}
	\begin{equation}\label{E:hautot_sum_-1}
		g(x)=e^{x^2/4}\sum_{k=0}^N A_k E_{\frac{e}{2}-k}(x)=e^{x^2/4}\sum_{k=0}^N A_k E_{(\theta_\infty-\theta_0-1)-k}(x),\quad x=(b-2z)/\sqrt{2},
	\end{equation}
where $E_\nu(x)$ (see appendix B) can be regarded as the \textit{parabolic cylinder functions of the second kind}. The function $g(x)$ provides a second solution to the \eqref{E:BHEC}  linearly independent from \eqref{E:hautot_sum_0} under the assumption that $\alpha+1=-N$.

We note the proof of Theorem \ref{T:BHC-degenerate-3} can be completed after have written the  Hautot sums  \eqref{E:hautot_sum_0}  and our \eqref{E:hautot_sum_-1} are gauge equivalent to
	\[
		f_j(x)=p_{0,\, j}(x)\, e^{x^2/4}\,  D_{\frac{e}{2}-N+j}(x)+ p_{1,\, j}(x)\, e^{x^2/4}\, \big(D_{\frac{e}{2}-N+j}(x)\big)^\prime,\quad 0\le j\le N
	\]
and
	\[
		g_j(x)=p_{0,\, j}(x)\, e^{x^2/4}\,  E_{\frac{e}{2}-N+j}(x)+ p_{1,\, j}(x)\, e^{x^2/4}\, \big(E_{\frac{e}{2}-N+j}(x)\big)^\prime,\quad 0\le j\le N
	\]
for the same polynomials $p_{0,\, j}(x)$, $p_{1,\, j}(x)$ in both $f$ and $g$ in Theorem \ref{T:gauge} respectively. This implies that the regular singularity of \eqref{E:BHC} (resp. \eqref{E:BHEC}) at the origin becomes an apparent singularity. Hence the \eqref{E:BHC} (resp. \eqref{E:BHEC}) is gauge equivalent to a parabolic connection (Theorem \ref{T:BHC-degenerate-3}) (resp. parabolic equation).

Indeed  special function expansions similar to \eqref{E:hautot_sum_0}  for Fuchsian type (scalar) differential equations appeared in earlier works of Heine \cite{heine_1859} for the Lam\'e equation, and Kimura \cite{kimura_1970}, Erdelyi \cite{Erdelyi1}, Wolfrat \textit{et al} \cite{STW_2004} for the Heun equations. We refer the reader to \cite{CCT} for a correspondence between special solutions of the Darboux equation (which is an elliptic version of the Heun equation) and special solutions of Painlev\'e VI.


	\item[(iii)] to derive new general solutions of {BHE} each written, with rigorous justification, as an \textit{infinite sum} of parabolic cylinder functions
	\begin{equation}\label{E:infinite_sum}
		y(x)=e^{x^2/4}\sum_{k=0}^\infty A_k D_{\frac{e}{2}-k}(x)
		=e^{x^2/4}\sum_{k=0}^\infty A_k D_{(\theta_\infty-\theta_0-1)-k}(x)
,\quad x=(b-2z)/\sqrt{2}
	\end{equation}
that \textit{converges uniformly} in an half-plane (Theorem \ref{T:half-expansion}) and that each infinite sum of parabolic cylinder functions terminates into the eigen-solutions studied in part (ii). We show one can also construct an entire solution to \eqref{E:BHE} that converges in $\mathbb{C}$ by applying the symmetry group of  \eqref{E:BHE} (Theorem \ref{T:full-expansion}) from
Proposition \ref{T:BHE-symmetry}.
\end{enumerate}

We now further review fundamentals about the isomonodromy deformation of Painlev\'e IV as described in Jimbo and Miwa \cite{JM_1981b} which is our main reference in this paper. The Biconfluent Heun-type connection is a connection  over the rank two trivial vector bundle over the Riemann sphere $\mathbb{CP}^1$ with punctures at $x=0,\, \infty$. In addition to the normalised connection \eqref{E:BHC}, it follows from \cite[Appendix C]{JM_1981b} (see also \cite[p. 151]{FIKN}) that the  \textrm{BHC} admits asymptotic expansion of the form \cite{wasow}:
 	\begin{equation}\label{E:asy-0}
 		Y(x)\sim \Big(1+\frac{Y_1}{x}+\cdots\Big)\, e^{T(x)} 	
	\end{equation}
where
	\begin{equation}\label{E:asy-1}
		\begin{split}
		e^{T(x)} &=\begin{pmatrix}
			1 & 0 \\
			0 & -1
			\end{pmatrix}\, \frac{x^2}{2}+
			\begin{pmatrix}
			t & 0\\
			0 & -t
			\end{pmatrix}
			x+
			\begin{pmatrix}
				\theta_\infty & 0\\
				0 & -\theta_\infty
			\end{pmatrix}
			\, \log\frac{1}{x}\\
			&= \Big(1+O\big(\frac{1}{x}\big)\Big)
			\begin{pmatrix}
				e^{x^2/2+xt}\, x^{-\theta_\infty} & 0\\
				0 & e^{-x^2/2-xt}\, x^{\theta_\infty}
			\end{pmatrix}
		\end{split}
	\end{equation}
where
	\[
		Y_1(x)=\frac12
		\begin{pmatrix}
			-H_{IV} & -u\\
			2(z-\theta_0-\theta_\infty)/u & H_{IV}		
			\end{pmatrix},
	\]
and
	\[
		H_{IV}=H_{IV}(y,\, z;\, t)=
		\frac2y\, z^2-\big(y+2t+\frac{4\theta_0}{y}\big)z
		+(\theta_0+\theta_\infty)(y+2t).
	\]	

Moreover, the asymptotic behaviour of the expansion \eqref{E:asy-0}  together with \eqref{E:asy-1} in the sectors
	\[
		\Omega_1:\ \big(-\frac{3\pi}{4},\, \frac{\pi}{4}\big),
		\qquad
		\Omega_2:\ \big(-\frac{\pi}{4},\, \frac{3\pi}{4}\big),
		\qquad
		\Omega_3:\ \big(\frac{\pi}{4},\, \frac{5\pi}{4}\big)
		\qquad
		\Omega_4:\ \big(\frac{3\pi}{4},\, \frac{7\pi}{4}\big)
		\]
 labelled by $\Psi_k^{(\infty)}(x;\, t)\ (k=1,\, 2,\,3,\, 4)$  respectively, are related by Stokes matrices $S_k$ \cite{JM_1981b},  \cite[p. 2038]{mugan-fokas} (see also \cite[pp. 181--182]{FIKN})
	\[
		\Psi_{k+1}^{(\infty)}(x;\, t)=\Psi_k^{(\infty)}(x;\, t)\, S_k,\quad k=1,\, 2,\, 3;
	\]
	\[
		\Psi_1^{(\infty)}(x;\, t)=\Psi_4^{(\infty)}(x;\, t) \, S_4\, e^{2\pi i\theta_\infty\sigma_3},
		\quad
		\sigma_3=
		\begin{pmatrix}
			1 & 0\\
			0 & -1
		\end{pmatrix},
	\]
where
	\begin{equation}\label{E:stoke_matrices}
		S_1=
		\begin{pmatrix}
			1    & 0\\
			s_1 & 1
		\end{pmatrix},
		\quad
		S_2=
		\begin{pmatrix}
			1    & s_2\\
			0 & 1
		\end{pmatrix},
		\quad
		S_3=
		\begin{pmatrix}
			1   & 0\\
			s_3 & 1
		\end{pmatrix},
		\quad
		S_4=
		\begin{pmatrix}
			1   & s_4\\
			0 	& 1
		\end{pmatrix},
	\end{equation}
and the entries $s_k,\ k=1,\, 2,\, 3,\, 4$, called the (elements of) Stokes multipliers (matrices) , are related by
	\begin{equation}\label{E:stoke-multipliers}
		(1+s_2s_3)e^{2\pi i \theta_\infty}+[s_1s_4+(1+s_3s_4)(1+s_1s_2)]e^{-2\pi i\theta_\infty}
		=2\cos 2\pi \theta_0.
	\end{equation}

We now adopt
\begin{definition} \label{D:BHC-criteria} A Biconfluent Heun connection to be a connection of the form \eqref{E:BHC} such that the residue matrices in \eqref{E:BHC-matrices} meet the following criteria:
	\begin{enumerate}
		\item  the $A$ can be replaced by $-A$;
		\item the traceless matrix $C$ has eigenvalues $\pm\theta_0$;
		\item  and finally the matrix $B$ such that the diagonal matrix associated to the term $\log \frac{1}{x}$ in \eqref{E:asy-1} has eigenvalues $\pm\theta_\infty$.
	\end{enumerate}
\end{definition}

\begin{theorem}\label{T:BHC-2}
	Let $\Psi$ be a matrix valued function satisfying the $\mathrm{BHC}$ \eqref{E:BHC} and \eqref{E:BHC-matrices}.  Then $\theta_0\pm \theta_\infty \in \mathbb{Z}$ if and only if either the pair of Stokes matrices $S_1,\, S_3$ or the pair of Stokes matrices  $S_2,\, S_4$ in \eqref{E:stoke_matrices} reduces to identity matrices.
\end{theorem}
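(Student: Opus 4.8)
The plan is to read the biconditional through the monodromy representation of the connection and to reduce everything to the single trace identity \eqref{E:stoke-multipliers} together with reducibility of that representation. Recall that a large positively oriented loop encircling the irregular point at $x=\infty$ carries the monodromy
\[
	M_\infty = S_1 S_2 S_3 S_4\, e^{2\pi i\theta_\infty\sigma_3},
\]
and that on $\mathbb{CP}^1$ punctured only at $0$ and $\infty$ this loop is freely homotopic to the inverse of a small loop about the origin, so $M_\infty$ is conjugate to $M_0^{-1}$, where $M_0$ has eigenvalues $e^{\pm 2\pi i\theta_0}$ coming from the eigenvalues $\pm\theta_0$ of $C$ in \eqref{E:BHC-matrices}. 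Taking traces, and using that $S_1,S_3$ are lower-triangular while $S_2,S_4$ are upper-triangular unipotent, reproduces exactly \eqref{E:stoke-multipliers}. This identity is the only algebraic relation I will need for one of the two implications.

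For the implication ``$\Leftarrow$'' I would simply substitute the degeneracy into \eqref{E:stoke-multipliers}. Setting $s_1=s_3=0$ (respectively $s_2=s_4=0$) collapses the left-hand side: every term carrying $s_1$ or $s_3$ (respectively $s_2$ or $s_4$) drops out and the remaining bracket reduces to $(1)(1)$, leaving
\[
	e^{2\pi i\theta_\infty}+e^{-2\pi i\theta_\infty}=2\cos 2\pi\theta_\infty=2\cos 2\pi\theta_0.
\]
Hence $\cos 2\pi\theta_\infty=\cos 2\pi\theta_0$, which forces $\theta_\infty=\pm\theta_0+k$ for some $k\in\mathbb{Z}$, i.e.\ $\theta_0\pm\theta_\infty\in\mathbb{Z}$. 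This direction is a one-line computation and needs nothing beyond \eqref{E:stoke-multipliers}.

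The substantive direction is ``$\Rightarrow$'', and here the trace identity alone is insufficient: \eqref{E:stoke-multipliers} is a single scalar constraint and, for generic nonzero $s_1,s_2,s_3,s_4$, it can be met while $\theta_0\pm\theta_\infty\in\mathbb{Z}$ without any Stokes multiplier vanishing. The extra input must be the specific structure of the connection \eqref{E:BHC-matrices}, which I would exploit through \emph{reducibility of the monodromy}. Concretely, I would show that when $\theta_0\pm\theta_\infty\in\mathbb{Z}$ the connection is gauge equivalent to a parabolic cylinder (Weber) connection and is therefore reducible, so that $S_1,\dots,S_4$ and the formal monodromy $e^{2\pi i\theta_\infty\sigma_3}$ possess a common invariant line. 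Because the formal monodromy is diagonal with eigenvalues $e^{\pm 2\pi i\theta_\infty}$, the only invariant lines available (when $2\theta_\infty\notin\mathbb{Z}$) are the coordinate axes $\mathbb{C}e_1$ and $\mathbb{C}e_2$. If the invariant line is $\mathbb{C}e_1$ then, since $S_1 e_1=e_1+s_1 e_2$ and $S_3 e_1=e_1+s_3 e_2$ for the lower-triangular factors, invariance forces $s_1=s_3=0$; if it is $\mathbb{C}e_2$ then the upper-triangular factors give $S_2 e_2=e_2+s_2 e_1$, $S_4 e_2=e_2+s_4 e_1$, and invariance forces $s_2=s_4=0$. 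Either alternative is precisely the asserted degeneracy.

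The main obstacle is therefore the first step of this direction: showing that the arithmetic resonance $\theta_0\pm\theta_\infty\in\mathbb{Z}$ genuinely forces the monodromy to be reducible. I would establish this via the gauge equivalence of the biconfluent Heun connection to the parabolic connection built from $\partial^2+(x^2+\alpha)$ (the content underlying Theorem~\ref{T:gauge} and Theorem~\ref{T:BHC-degenerate-3}): the resonance identifies an eigendirection of the residue $C$ at the origin with a formal exponent at infinity, producing a one-dimensional subbundle preserved by parallel transport, equivalently the eigen-solution \eqref{E:hautot_sum_0} spanning a monodromy-invariant line. I expect this reducibility to persist across the whole resonant locus, including the three values that are exceptional for the explicit eigen-solution expansion, so that the correspondence at the level of Stokes data carries no exceptions, in contrast to the invariant-subspace statement. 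The borderline case $2\theta_\infty\in\mathbb{Z}$, where the two coordinate axes cease to be the unique invariant lines of the formal monodromy, would be handled separately by a continuity argument from the generic values of $\theta_\infty$.
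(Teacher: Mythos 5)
Your ``$\Leftarrow$'' half (Stokes degeneration implies $\theta_0\pm\theta_\infty\in\mathbb{Z}$) coincides with the paper's argument: substitute $s_1=s_3=0$ (or $s_2=s_4=0$) into \eqref{E:stoke-multipliers} and read off $\cos 2\pi\theta_\infty=\cos2\pi\theta_0$. The genuine gap is in your ``$\Rightarrow$'' direction, and it is exactly the step you yourself flag as the main obstacle: the claim that the resonance $\theta_0\pm\theta_\infty\in\mathbb{Z}$ \emph{by itself} forces the connection to be reducible (gauge equivalent to a Weber/parabolic cylinder connection), so that the Stokes matrices acquire a common invariant line. This claim is not available from the paper and is false as stated. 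Besides $\theta_0,\theta_\infty$, the matrices \eqref{E:BHC-matrices} carry the parameters $u,\,y,\,z,\,t$, and the quantity playing the role of the accessory parameter must in addition be one of the finitely many eigenvalues cut out by the determinant condition $\det\Phi=0$ in the proof of Theorem \ref{T:invariant_map} before any eigen-solution \eqref{E:hautot_sum_0}, i.e.\ any invariant line, exists. This is precisely the content of the Remark in \S1 (``the above monodromy degeneration criterion alone does not guarantee\dots''), of the invariant subspace theory of Theorems \ref{T:invariant_subspace} and \ref{T:invariant_map}, and of the Duval--Loday/Kovacic result, which requires more than the resonance $\gamma-\alpha-2=2N$ for Liouvillian solutions. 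For a generic accessory parameter on the resonant locus the connection is irreducible, so your invariant line does not exist and the argument cannot produce $s_1=s_3=0$ or $s_2=s_4=0$. Note also that the two results you invoke point the wrong way for your purpose: Theorem \ref{T:BHC-degenerate-3}(ii) is an ``only if'' statement (solvability $\Rightarrow$ resonance, not the converse), and Theorem \ref{T:gauge} takes the existence of eigen-solutions as a hypothesis. Your closing sentence ``I expect this reducibility to persist across the whole resonant locus'' is the unproven --- indeed unprovable without the accessory-parameter condition --- step.

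By contrast, the paper's proof of this direction never leaves the algebraic relation \eqref{E:stoke-multipliers}: substituting $\cos2\pi\theta_0=\cos2\pi\theta_\infty$ reduces it to
\begin{equation*}
s_2s_3\,e^{2\pi i\theta_\infty}+\big(s_1s_4+s_3s_4+s_1s_2+s_1s_2s_3s_4\big)\,e^{-2\pi i\theta_\infty}=0,
\end{equation*}
and the conclusion is extracted by a case analysis on the multipliers (e.g.\ $s_2=0$, $s_3\neq0$ gives $s_4(s_1+s_3)=0$, after which the configuration $s_3=-s_1$, i.e.\ $S_3=S_1^{-1}$, is excluded). Whatever one thinks of the details of that case analysis, it is a computation on the Stokes data alone and needs no reducibility statement about the connection; your route, as it stands, hinges on one. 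To repair your argument you would either have to build the accessory-parameter condition into the hypothesis (which changes the statement of the theorem) or abandon the reducibility detour and argue directly on \eqref{E:stoke-multipliers} as the paper does.
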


It follows from \eqref{E:matching-coeff} that the first condition in \eqref{E:degeneration-set} is equivalent to the commonly seen criterion
		\begin{equation}\label{E:P4_special_1}
			\beta=-2(2n+1+\varepsilon \alpha)^2,\quad \varepsilon=\pm 1.
		\end{equation}	
\begin{remark} We would like to mention that the above monodromy degeneration criterion alone does not guarantee, one needs to determine the appropriate eigenvalues before being able to write down the corresponding eigen-solutions.
\end{remark}
The correspondence between the second condition in \eqref{E:degeneration-set} and again the other commonly seen criterion
		\[
			  \beta=-2n^2, \quad n\in\mathbb{Z}
		\]
will be considered in the first part of the next theorem.

\begin{theorem}\label{T:BHC-degenerate-3}
The Biconfluent Heun connection \eqref{E:BHC}
	\begin{enumerate}
		\item[(i)] can be transformed from a parabolic type connection
			\begin{equation}\label{E:parabolic-connection}
				\frac{d\Phi}{dx}=\big(A^\prime\, x+B^\prime\big)\Phi,
			\end{equation}
	 by a Schlesinger (gauge) transformation only if $2\theta_0=n\in \mathbb{Z}$ (or equivalently $\beta=-2n^2$);
					\item[(ii)] the differential Galois group of the \eqref{E:BHC} is solvable only if $\theta_0\pm \theta_\infty \in \mathbb{Z}$ holds (or equivalently $\beta=-2(2n+1+\varepsilon \alpha)^2,\quad \varepsilon=\pm 1$\textrm{)}.;
		\item[(iii)] shares the same parameter space $(\theta_0,\, \theta_\infty)$ of Painlev\'e $\mathrm{IV}$  \eqref{E:P4} in that both equations admit solutions lying in the Picard-Viessot extension of $L=\partial^2+(x^2+\alpha)$ from the reductions of (i) and (ii).
	\end{enumerate}
\end{theorem}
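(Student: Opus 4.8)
The plan is to prove the three parts by two distinct mechanisms and then to combine them for part (iii). Parts (i) and (ii) isolate the two ways in which the connection \eqref{E:BHC} can acquire solutions inside the Picard--Vessiot extension $\mathrm{PV}(L)$ of the Weber operator $L=\partial^2+(x^2+\alpha)$: an \emph{apparent} singularity at the origin, and a \emph{degenerate Stokes structure} at infinity. Part (iii) then simply reads off that both loci, together with Okamoto's special-solution locus for $\mathrm{P_{IV}}$, coincide. Throughout I would pass freely between \eqref{E:BHC} and its scalar companion \eqref{E:BHE_JM}, and use Theorem~\ref{T:BHC-2} as a black box translating the vanishing of a pair of Stokes multipliers into $\theta_0\pm\theta_\infty\in\mathbb{Z}$.

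For part (i) it suffices to prove the stated necessity, which I would do through \eqref{E:BHE_JM}. Its indicial equation at $x=0$ is $r(r-1)+2\theta_0 r=0$, with roots $0$ and $1-2\theta_0$; at the level of the system this says the local monodromy $\exp(2\pi i C)$, with $C$ having eigenvalues $\pm\theta_0$, is $\mathrm{diag}(e^{2\pi i\theta_0},e^{-2\pi i\theta_0})$. For \eqref{E:BHC} to be gauge equivalent, by a rational Schlesinger transformation, to the parabolic connection \eqref{E:parabolic-connection}---which has the origin as an ordinary point---the singular point must be removable, so the exponent difference $1-2\theta_0$ must be an integer and the monodromy scalar ($I$ or $-I$ according as $2\theta_0$ is even or odd). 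Either way $2\theta_0\in\mathbb{Z}$, and $\eta=-8\theta_0^2$ from \eqref{E:matching-coeff} rewrites this as $\eta=-2n^2$, the second branch of \eqref{E:degeneration-set}. Granting the reduction, the parabolic connection \eqref{E:parabolic-connection} is gauge equivalent, after clearing the first-order term, to the companion system of $L$, so its horizontal sections are parabolic cylinder functions.

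For part (ii) I would invoke Ramis' density theorem: the differential Galois group $G$ of \eqref{E:BHC} is the Zariski closure of the group generated by the one-dimensional exponential torus $T=\{\mathrm{diag}(c,c^{-1})\}$ arising from the conjugate exponentials $e^{\pm(x^2/2+tx)}$ in \eqref{E:asy-1}, the diagonal formal monodromy $e^{2\pi i\theta_\infty\sigma_3}$, and the Stokes matrices $S_1,S_2,S_3,S_4$ of \eqref{E:stoke_matrices}; because \eqref{E:BHC} has only the two singular points $0,\infty$, the monodromy at the origin is not an independent generator but is determined by the $S_k$ and $e^{2\pi i\theta_\infty\sigma_3}$ through the cyclic relation underlying \eqref{E:stoke-multipliers}. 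Here $S_1,S_3$ are lower unipotent and $S_2,S_4$ upper unipotent. If one of $s_1,s_3$ and one of $s_2,s_4$ are simultaneously nonzero, then $G$ contains $T$ together with a nontrivial lower and a nontrivial upper unipotent, whose Lie algebra is $\mathfrak{sl}_2$, so $G\supseteq SL_2$ and $G$ is not solvable. Hence solvability forces $s_1=s_3=0$ or $s_2=s_4=0$, i.e. $S_1=S_3=I$ or $S_2=S_4=I$, and Theorem~\ref{T:BHC-2} converts this into $\theta_0\pm\theta_\infty\in\mathbb{Z}$ (the converse being immediate, since a vanishing pair makes all generators, and hence also the dependent origin monodromy, simultaneously triangular). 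Via $\xi=2\theta_\infty-1$ and $\eta=-8\theta_0^2$ this is $\beta=-2(2n+1+\varepsilon\alpha)^2$, the first branch of \eqref{E:degeneration-set}.

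Part (iii) is then the synthesis. From (i), when $2\theta_0\in\mathbb{Z}$ the connection reduces to the companion of $L$, so all its solutions lie in $\mathrm{PV}(L)$; from (ii), when $\theta_0\pm\theta_\infty\in\mathbb{Z}$ the group $G$ is triangular and the Liouvillian solutions are the terminating Hautot sums \eqref{E:hautot_sum_0}, finite combinations of parabolic cylinder functions and so again in $\mathrm{PV}(L)$. Since Okamoto's locus \eqref{E:degeneration-set} is exactly $2\theta_0\in\mathbb{Z}$ and/or $\theta_0\pm\theta_\infty\in\mathbb{Z}$, and the corresponding $\mathrm{P_{IV}}$ solutions are themselves built from the same parabolic cylinder functions, the two parameter loci and their common function field $\mathrm{PV}(L)$ coincide. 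I expect the main obstacle to be part (ii): making the invocation of Ramis' theorem fully rigorous with the precise Stokes normalisation of \eqref{E:stoke_matrices}, and, in (i), discharging the resonance (no-logarithm) condition through the accessory parameter while reconciling the scalar apparent-singularity picture with the $-I$ scalar-monodromy case at the level of the gauge class; the identification of both reductions with the single field $\mathrm{PV}(L)$ in (iii) is then bookkeeping built on the earlier theorems.
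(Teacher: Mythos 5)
Your parts (i) and (ii), taken as the literal ``only if'' statements, are correct, and your part (ii) is essentially the paper's own argument in sharper form: the paper invokes Kolchin's classification to force the Stokes matrices $S_1,\dots,S_4$ of a solvable Galois group into simultaneous triangular form, concludes that either $S_1,S_3$ or $S_2,S_4$ must be the identity, and then applies Theorem \ref{T:BHC-2}; your version via Ramis density plus the observation that a nontrivial lower and a nontrivial upper unipotent have Zariski closure containing $SL_2$ is the same skeleton with a more careful justification of why the $S_k$ (and the dependent monodromy at the origin) generate the Galois group. Your part (i), by contrast, is genuinely different from what the paper does: you prove necessity by a local-monodromy argument (a rational gauge transformation preserves the monodromy at $x=0$ up to a scalar sign, the parabolic connection \eqref{E:parabolic-connection} is nonsingular there, hence the monodromy of \eqref{E:BHC} at $0$ is $\pm I$ and $2\theta_0\in\mathbb{Z}$), whereas the paper never argues this direction at all. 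Its \S\ref{S:completion} (Theorem \ref{E:schlesinger}) proves the \emph{converse}: starting from $2\theta_0=\alpha+1\in\mathbb{Z}$ it \emph{constructs} the Schlesinger transformation explicitly, using the eigen-solutions of Theorem \ref{T:invariant_map}, their rewriting in Theorem \ref{T:gauge} as polynomial combinations of $e^{x^2/4}D_\nu$, $e^{x^2/4}D_\nu^\prime$ (and of the second-kind functions $E_\nu$), and the formula $\mathcal{A}=\big(R\mathcal{P}+R_x\big)R^{-1}$, together with the symmetry $\theta_0\mapsto-\theta_0$ of Theorem \ref{T:BHC_sym} to cover all integer cases.

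This difference is exactly where your proposal has a genuine gap, and it surfaces in part (iii). Your synthesis asserts ``From (i), when $2\theta_0\in\mathbb{Z}$ the connection reduces to the companion of $L$,'' but your (i) proves only the reverse implication (existence of the reduction $\Rightarrow 2\theta_0\in\mathbb{Z}$); the existence of the reduction on the locus $2\theta_0\in\mathbb{Z}$ is the hard constructive content, and it is precisely what the paper's \S5--\S7 machinery (invariant subspaces $\mathcal{I}_N$, $\mathcal{J}_N$, the accessory-parameter eigenvalue problem, Theorem \ref{T:gauge}, Theorem \ref{E:schlesinger}) exists to supply. Likewise, on the branch $\theta_0\pm\theta_\infty\in\mathbb{Z}$, solvability of the Galois group by itself only yields Liouvillian solutions in the abstract; the stronger claim of (iii), that the solutions lie in the Picard--Vessiot extension of the specific operator $\partial^2+(x^2+\alpha)$, again rests on the terminating Hautot sums \eqref{E:hautot_sum_3}, which you cite but do not derive (and which require choosing the accessory parameter $d$ as a root of the $(N+1)\times(N+1)$ determinant, a step absent from your outline). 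So your proposal disposes correctly of the two necessity statements, one of them by a route the paper does not take, but part (iii) as you present it rests on unproved sufficiency claims that constitute the main body of the paper's actual proof.
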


We remark that the conclusion (i) corresponds to having the original biconfluent Heun connection can be transformed from a parabolic type connection via an appropriate Schlesinger (gauge) transformation. The conclusion (ii) essentially means that the entries of $\Psi$ can be expressed as a finite combinations of Hermite polynomials. This becomes explicit when we consider the corresponding biconfluent Heun equation.

This paper is organised as follows. We study the symmetries of the BHE and BHC in \S2 which will be used in the proof of Theorem \ref{T:BHC-degenerate-3}. In \S3 we prove the main results concerning the BHC. In particular we demonstrate that
the monodromy/Stokes multipliers of the BHC \textit{degenerates} as in Theorem \ref{T:BHC-degenerate-3} only if the parameter space $(\theta_0,\, \theta_\infty)$ corresponds to that of Painlev\'e IV equation $\mathrm{P_{IV}}$ via \eqref{E:matching-coeff}. The discussion of an algebraic structure of the BHC/BHE beyond the symmetry groups mentioned above which appears to be different from the affine Weyl group $\tilde{A}_2$ symmetry that is well-known for $\mathrm{P_{IV}}$ is beyond the scope of this paper. We continue our study of BHE in \S4 where we first show that the BHE admits an infinite expansion in terms of Parabolic functions in an half-plane. We prove its convergence by applying a recent asymptotic result on second order difference equations of Wong-Li \cite{wong}. We then demonstrate that the parabolic expansion terminates precisely when the coefficients in BHE correspond to
	\[
		2\,\theta_0\in\mathbb{Z},\qquad\textrm{or}
		\qquad
		\theta_0\pm \theta_\infty\in\mathbb{Z},
	\]
via \eqref{E:matching-coeff} with the exception of ``three straight lines'' passing through the origin of the $(\alpha,\beta)-$plane on which the ${\tilde{A}}^{(1)}_2$ lives.

\section{Symmetries}
\setcounter{equation}{0}

 Comparing (\ref{E:BHE}) with (\ref{E:BHEC}), one has
{so that}
 $$
 (b,c,d,e)=\big(-\be,\al+1,-\frac{1}{2}(\de+(1+\al)\be),\ga-\al-2\big).
 $$

 \begin{proposition}[\cite{maroni_a, maroni_b}]
 \label{T:BHE-symmetry}
 If we denote by $\phi_1(x)=y(\al,\be,\ga,\de;x)$ a solution of
 {\sl BHE} (\ref{E:BHEC}), then the following functions are also
 solutions of {\sl BHE}:
 \begin{eqnarray*}
 && \phi_2=z^{-\al}y(-\al,\be,\ga,\de;z) \\
 && \phi_3=y(\al,-\be,\ga, -\de; -z)\\
 && \phi_4=e^{\be z+ z^2}y(\al,-i \be,,-\ga,i \de; -i z) \\
 && \phi_5= e^{\be z+ z^2}y(\al,i \be,,-\ga,-i \de; i z) \\
 && \phi_6=z^{-\al}e^{\be z +z^2}y(-\al,-i \be,-\ga,i \de; -i z)\\
 && \phi_7=z^{-\al}e^{\be z+z^2}y(-\al,i \be,-\ga,-i \de; i z)\\
 && \phi_8=z^{-\al}y(-\al,-\be,\ga,-\de; -z).\\
 \end{eqnarray*}
 In particular, the symmetry group of the BHE is given by $C_2\times C_4$.
 \end{proposition}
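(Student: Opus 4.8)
The plan is to recognize the seven auxiliary functions as the non-trivial members of a group of symmetry operators acting on the solution space of the BHE, generated by two commuting elementary transformations, and then to verify only the two generators by direct substitution. Writing the BHE \eqref{E:BHEC} as $\mathcal{L}[\alpha,\beta,\gamma,\delta]\,y=0$, I would introduce two operators: the \emph{Frobenius index-swap} $\tau$, which sends a solution $y(\alpha,\beta,\gamma,\delta;x)$ to $x^{-\alpha}y(-\alpha,\beta,\gamma,\delta;x)$ and acts on parameters by $(\alpha,\beta,\gamma,\delta)\mapsto(-\alpha,\beta,\gamma,\delta)$; and the \emph{rotation--gauge} operator $\sigma$, which sends $y(\alpha,\beta,\gamma,\delta;x)$ to $e^{\beta x+x^2}y(\alpha,-i\beta,-\gamma,i\delta;-ix)$ and acts by $(\alpha,\beta,\gamma,\delta)\mapsto(\alpha,-i\beta,-\gamma,i\delta)$. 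With these in hand one has $\phi_2=\tau$, $\phi_4=\sigma$, $\phi_5=\sigma^{3}=\sigma^{-1}$, $\phi_3=\sigma^{2}$, $\phi_6=\tau\sigma$, $\phi_7=\tau\sigma^{3}$ and $\phi_8=\tau\sigma^{2}$, so that proving the proposition reduces to (a) verifying that $\tau$ and $\sigma$ map solutions to solutions with the stated parameter shifts, and (b) computing the group they generate.

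For step (a), the operator $\tau$ is the classical second Frobenius solution at the regular singular point $x=0$: substituting $y=x^{-\alpha}v$ into \eqref{E:BHEC} and clearing the power of $x$ produces exactly $\mathcal{L}[-\alpha,\beta,\gamma,\delta]\,v=0$, a short calculation using that the indicial exponents are $0$ and $-\alpha$. The operator $\sigma$ is the substantive verification. I would first perform the rotation $x=-iz$, under which $-2x^2\mapsto 2z^2$ and $-\beta' x\mapsto+\beta z$ with $\beta'=-i\beta$, so that the rotated equation carries the \emph{wrong} signs in its coefficient of $y'$; then I would apply the gauge factor $e^{g}$ with $g=\beta z+z^2$, write $y=e^{-g}\phi$, and check that the term $-2zg'$ arising from the gauge exactly restores the coefficient of $\phi'$ to $1+\alpha-\beta z-2z^2$. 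The coefficient of $\phi$ then collects into $(\gamma-\alpha-2)z-\tfrac12(\delta+(1+\alpha)\beta)$ once the substitutions $\gamma'=-\gamma$, $\delta'=i\delta$ are made, recovering \eqref{E:BHEC} with the original parameters.

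For step (b), I would check that $\tau^2=\mathrm{id}$ and $\sigma^4=\mathrm{id}$, both on the parameter shifts and on the functional prefactors: the index shift is an involution, while $\beta\mapsto-i\beta$ has order four and the exponential--rotation prefactors telescope to the identity after four iterations. I would then check that $\tau$ and $\sigma$ commute, since the power prefactor $x^{-\alpha}$ and the exponential--rotation prefactor act independently and the parameter shifts $(\alpha\mapsto-\alpha)$ and $(\beta,\gamma,\delta\mapsto-i\beta,-\gamma,i\delta)$ are supported on disjoint coordinates. Hence the group generated is $\langle\tau,\sigma\mid\tau^2=\sigma^4=1,\ \tau\sigma=\sigma\tau\rangle\cong C_2\times C_4$, of order eight, and the eight functions $\phi_1,\dots,\phi_8$ are precisely its elements.

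The main obstacle is the $\sigma$ computation in step (a): one must track how the gauge $e^{\beta z+z^2}$ interacts with both the leading $-2x^2$ term and the cross term $-\beta x$ after the imaginary rotation, and confirm that all the $z^3$ and $z^2$ contributions cancel so that the transformed equation is again of biconfluent Heun form rather than a more general equation with spurious higher-order coefficients; the sign bookkeeping through $i^2=-1$ is where errors are most likely. A secondary point worth stating explicitly is that the proposition asserts the group \emph{is} $C_2\times C_4$: the argument above shows the group generated by these natural transformations is $C_2\times C_4$, and one should note that the listed eight functions exhaust it, so no further symmetries of this elementary (power times exponential times rotation) type occur.
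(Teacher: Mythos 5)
Your proposal is correct. Note, however, that the paper does not actually prove this proposition: it is imported from Maroni \cite{maroni_a,maroni_b} and stated without proof, so the only internal comparison available is with the paper's proof of the connection-level analogue, Theorem \ref{T:BHC_sym}. Your argument follows exactly that pattern: exhibit two commuting generators, one of order two (your $\tau$, the Frobenius index swap at the regular singular point, mirroring the paper's $b:\theta_0\mapsto-\theta_0$) and one of order four (your $\sigma$, the rotation--gauge map, mirroring the paper's $a:x\mapsto ix$, $\theta_\infty\mapsto-\theta_\infty$), then identify the eight listed functions with the elements of $\langle\tau\rangle\times\langle\sigma\rangle\cong C_2\times C_4$. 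Your identifications check out: $\sigma^2$ is the parity map $u(z)\mapsto u(-z)$ with $(\beta,\delta)\mapsto(-\beta,-\delta)$, i.e.\ $\phi_3$; $\sigma^3=\sigma^{-1}$ is $\phi_5$; and $\tau\sigma$, $\tau\sigma^3$, $\tau\sigma^2$ give $\phi_6$, $\phi_7$, $\phi_8$. Two details deserve explicit care in a write-up. First, in verifying $\tau$ one should observe that $\delta$ stays fixed because the accessory combination $\tfrac12\big(\delta+(1+\alpha)\beta\big)$ re-assembles itself under $\alpha\mapsto-\alpha$: the substitution $y=x^{-\alpha}v$ produces the coefficient $\tfrac12\big(\delta+(1-\alpha)\beta\big)$, which is precisely the right combination for the parameter set $(-\alpha,\beta,\gamma,\delta)$, so no compensating shift of $\delta$ is needed. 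Second, at the level of functions $\tau$ and $\sigma$ commute only up to the nonzero (branch-dependent) constant $(-i)^{\pm\alpha}$ coming from $(-iz)^{\alpha}$ versus $z^{\alpha}$; they commute exactly as maps of the parameter space and, projectively, as maps between solution spaces, which is all the group statement requires --- the paper's proof of Theorem \ref{T:BHC_sym} is equally terse on this same point, so this is a refinement rather than a gap.
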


We show the Biconfluent Heun connection also shares the symmetry group $C_2\times C_4$:
\begin{theorem}\label{T:BHC_sym}
	The $\mathrm{BHC}$ \eqref{E:BHC} has its symmetry group isomorphic to $C_2\times C_4$.
\end{theorem}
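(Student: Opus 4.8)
The plan is to transfer the symmetry group $C_2\times C_4$ known for the scalar BHE (Proposition \ref{T:BHE-symmetry}) to the connection \eqref{E:BHC} by realising each scalar symmetry $\phi_j$ as an explicit transformation of the connection data. The key observation is that the BHC \eqref{E:BHC}--\eqref{E:BHC-matrices} and the BHE \eqref{E:BHE_JM} are linked through the blow-up procedure described after \eqref{E:P4}, so any symmetry acting on the scalar equation must be induced by a symmetry acting on the connection; conversely, any gauge/change-of-variable transformation preserving the class of connections in Definition \ref{D:BHC-criteria} descends to the BHE. I would therefore produce the symmetries of \eqref{E:BHC} directly and then match them against the eight listed $\phi_1,\dots,\phi_8$.

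First I would identify the two generating operations. The factor $C_4$ should come from the order-four symmetry visible in $\phi_4,\dots,\phi_7$, which combines the rotation $x\mapsto \mp i x$ of the independent variable with the exponential gauge factor $e^{\beta x + x^2}$ and the sign/rotation changes $\beta\mapsto \mp i\beta$, $\gamma\mapsto -\gamma$, $\delta\mapsto \pm i\delta$. At the level of the connection, the substitution $x\mapsto \omega x$ with $\omega^4=1$ together with a constant conjugation by a suitable matrix (e.g.\ $\sigma_3$ or a rotation) should map the matrices $A,B,C$ in \eqref{E:BHC-matrices} to matrices of the same form with transformed parameters $(t,u,z,\theta_0,\theta_\infty)$; one checks the rank-two irregular structure at $\infty$ in \eqref{E:asy-1} is preserved up to relabelling the Stokes sectors $\Omega_k$. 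The factor $C_2$ should come from the involution behind $\phi_2,\phi_3,\phi_8$, namely the flip $\theta_0\mapsto -\theta_0$ (equivalently $\alpha\mapsto -\alpha$ accompanied by the prefactor $x^{-\alpha}$), which on the connection side is a conjugation exchanging the eigenvalues $\pm\theta_0$ of $C$; this corresponds to the off-diagonal symmetry of the residue matrix $C$.

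The concrete mechanism I would use is a Schlesinger (gauge) transformation $\Psi\mapsto G(x)\Psi$ combined with a Möbius change of the independent variable fixing $\{0,\infty\}$, i.e.\ $x\mapsto \omega x$. For each candidate, I would verify that the transformed connection $\widetilde{\mathcal A}=G\mathcal A G^{-1}+(dG/dx)G^{-1}$ again has the normalised shape $A'x+B'+C'/x$ with $A'=\pm A$, $C'$ traceless with eigenvalues $\pm\theta_0$, and the correct $\log(1/x)$-term with eigenvalues $\pm\theta_\infty$, thereby confirming it is a BHC in the sense of Definition \ref{D:BHC-criteria}. Checking closure under composition and computing the orders (the rotation has order $4$, the $\theta_0$-flip order $2$, and that they commute) then yields the abstract group $C_2\times C_4$, and the compatibility with \eqref{E:2nd-eqn}--\eqref{E:compatibility} ensures these are genuine isomonodromy symmetries rather than accidental ones.

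I expect the main obstacle to be bookkeeping rather than conceptual: the exponential gauge factor $e^{\beta x+x^2}$ in $\phi_4$--$\phi_7$ does not fix the irregular type $e^{T(x)}$ in \eqref{E:asy-1} pointwise but shifts it, so one must track carefully how $G(x)$ interacts with both the $x^2/2$ and the $xt$ terms and how the four Stokes matrices $S_1,\dots,S_4$ in \eqref{E:stoke_matrices} get permuted under $x\mapsto \omega x$; keeping the identification of $u$ (which is gauge-dependent) consistent across transformations, and confirming that the induced action on $(\theta_0,\theta_\infty)$ exactly reproduces the parameter changes of $\phi_2,\dots,\phi_8$, is where the real verification lies. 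Establishing that no extra symmetries appear, so that the group is \emph{exactly} $C_2\times C_4$ and not larger, would be handled by noting that the connection symmetries must descend to scalar symmetries of the BHE, whose group is already pinned down to $C_2\times C_4$ by Proposition \ref{T:BHE-symmetry}.
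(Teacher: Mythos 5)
Your proposal is correct in substance and lands on exactly the same two generators as the paper, but it reaches them by a different route, so a comparison is worthwhile. The paper's proof never invokes Proposition \ref{T:BHE-symmetry}: it works directly on the set $\mathcal{B}_{0,\,\infty}$ of connections, defining $a$ by $x\mapsto ix$, $\theta_\infty\mapsto-\theta_\infty$ (writing out $\tilde{A}$, $\tilde{B}$ explicitly and checking the images of $B$ under $a^2$, $a^3$ to get $a^4=I$) and $b$ by $\theta_0\mapsto-\theta_0$, then verifying $b^2=I$ and $ab=ba$. Your route goes the other way: you lift the eight scalar symmetries $\phi_1,\dots,\phi_8$ through the blow-up/gauge correspondence, realising the $C_4$ from $\phi_4$--$\phi_7$ (whose exponential prefactor $e^{\beta x+x^2}$ is precisely the scalar shadow of the paper's sign flip $A\mapsto-A$ permitted by Definition \ref{D:BHC-criteria}) and the $C_2$ from the $\theta_0$-flip. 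What your approach buys that the paper's does not is the final maximality step: the paper only exhibits a group generated by $a$ and $b$ and then declares the symmetry group to be $C_2\times C_4$, whereas you cap the group by descending connection symmetries to scalar ones. The price is that this descent argument rests on the asserted bidirectional correspondence between scalar and connection symmetries, which is the weakest link: you would need to show the descent map is well defined and injective --- for instance, constant diagonal conjugations preserve the normalisation \eqref{E:BHC-matrices} (they merely rescale the gauge-dependent quantity $u$) yet act trivially on the scalar equation, so "symmetry" must first be defined modulo such trivial gauge actions before Proposition \ref{T:BHE-symmetry} can pin the group down. The paper sidesteps this subtlety entirely by the direct construction, at the cost of leaving exactness implicit.
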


\begin{proof} Let $\mathcal{B}_{0,\, \infty}$ be the set of biconfluent Heun connections as defined in the Definition \ref{D:BHC-criteria}.  We define  $a:\, \mathcal{B}_{0,\, \infty}\longrightarrow \mathcal{B}_{0,\, \infty}$ be such that
	\[
		d+(Ax+B+C/x)\, dx \ \longmapsto\  d+(\tilde{A}x+\tilde{B}+\tilde{C}/x)\, dx,
	\]
where
	\[
	 	\theta_\infty\longmapsto -\theta_\infty,
		\qquad
		x\longmapsto ix
	\]
where $\pm\theta_\infty$ are eigenvalues of the diagonal matrix associated to the term $\log \frac{1}{x}$ in the corresponding \eqref{E:asy-1} for $\tilde{B}$ , i.e., that diagonal matrix becomes
		\[
			\begin{pmatrix}
				-\theta_\infty & 0\\
				0 & \theta_\infty
			\end{pmatrix}.
		\]
More precisely, we have
		\[
			\tilde{A}=
			\begin{pmatrix}
				-1 & 0\\
				0 & 1
			\end{pmatrix},
			\quad
			\tilde{B}=
			\begin{pmatrix}
			it & iu\\
			2(-z+\theta_0-\theta_\infty)/(iu), & -it
			\end{pmatrix}.
		\]
Indeed it can be verified that the ${B}$ becomes
	\[
		\begin{pmatrix}
			-t & -u\\
			2(-z+\theta_0+\theta_\infty)/u, & t
			\end{pmatrix}
		\quad\textrm{and}\quad
		\begin{pmatrix}
			-it & -iu\\
			2(-z+\theta_0-\theta_\infty)/(-iu), & it
			\end{pmatrix}
		\]
upon the actions of $a^2$ and $a^3$ respectively. Finally, taking into account of action of $a$ on the matrices $A$ and $C$, it is straightforward to check that $a^4=I$. We now define $b:\, \mathcal{B}_{0,\, \infty}\longrightarrow \mathcal{B}_{0,\, \infty}$ to be such that
	\begin{equation}\label{E:theta_0}
		\theta_0\longmapsto -\theta_0.
	\end{equation}
Clearly $b^2=I$. It is easy to verify that $ab=ba$. Hence the symmetry group of the \eqref{E:BHC} is isomorphic to $C_2\times C_4$ as desired.
\end{proof}


\section{Proof of Theorem \ref{T:BHC-2}}
\setcounter{equation}{0}
\begin{proof}  Suppose $S_1,\, S_3$ in \eqref{E:stoke-multipliers} reduce to identity matrices, i.e., $s_1=s_3=0$. Then the equation \eqref{E:stoke-multipliers} reduces to
	\[
		\cos 2\pi\, \theta_\infty=\cos 2\pi\, \theta_0.
		\]
A simple trigonometric argument shows that we must have $\theta_0\pm \theta_\infty=n$ for some integer $n$. If $S_2,\, S_4$ reduce to identity matrices, then one can also deduce the same conclusion with a similar argument. Conversely,
suppose $\theta_0\pm \theta_\infty=n$ is an integer. Then the equation \eqref{E:stoke-multipliers} becomes
	\begin{equation} \label{E:stoke-multipliers-2}
		(1+s_2s_3)e^{2\pi i \theta_\infty}+[s_1s_4+(1+s_3s_4)(1+s_1s_2)]e^{-2\pi i\theta_\infty}
		=2\cos 2\pi\, \theta_\infty.
	\end{equation}
To avoid a contradiction of the compatibility of real and imaginary parts on both sides, let us first assume that $s_2=0$ while $s_3\not=0$. Then
the equation \eqref{E:stoke-multipliers-2} becomes
	\[
			e^{2\pi i \theta_\infty}+[s_1s_4+(1+s_3s_4)]e^{-2\pi i\theta_\infty}
		=2\cos 2\pi\, \theta_\infty.
	\]
That is,
	\[
		s_4(s_1+s_3)=0.
	\]
Suppose $s_4\not=0$. Then $s_3=-s_1$. But then we have the matrix relation $S_3=S_1^{-1}=-S_1$. So the two matrices are identical under a projective change of coordinates. Hence $s_4=0$. That is, both $S_2,\, S_4$ reduce to identity matrices. If we now assume instead that $s_3=0$ and $s_2\not=0$, then by  a similar argument, we deduce $s_1=0$. Hence both $S_1,\, S_3$ are identity matrices.\\
\end{proof}

\section{Proof of Theorem \ref{T:BHC-degenerate-3}}
\setcounter{equation}{0}

We first prove parts (ii) and (iii) here. The proof of part (i) will be completed after the discussion of invariant subspace of BHE \eqref{E:BHE} in \S\ref{S:completion}.\\
We suppose that the differential Galois group of the \eqref{E:BHC} is solvable. Then it follows from the Kolchin's classification of differential Galois groups that the matrix representations of the algebraic subgroups $S_1,\cdots, S_4$ must belong to the category of triangular matrices \cite{kolchin}. Hence either $S_1,\, S_3$ or $S_2,\, S_4$ must reduce to identity matrices. It follows from part (i) above that $\theta_0\pm \theta_\infty=n$ is an integer. This completes the proof of part (ii).

\section{Invariant subspaces} 
\setcounter{equation}{0}

We now turn our attention to the \eqref{E:BHE} or equivalently \eqref{E:BHEC}.
Duval and Loday \cite[Prop. 13]{Duval_Loday1992}  applied the celebrated Kovacic algorithm \cite{Kovacic1986} to show that the BHE \eqref{E:BHEC} admits \textit{Liouvillian solutions} only when $\gamma-\alpha-2=2N$ for some integer $N\ge 1$. This implies that the BHE \eqref{E:BHEC} admits polynomial solutions which were previously obtained independently by Hautot \cite{hautot_1969}. Moreover, Hautot shows in \cite{hautot_1969} and in \cite{hautot_1971} that one could consider solutions of the BHE \eqref{E:BHE} in the form
	\begin{equation}\label{E:hautot_sum}
		y(x)=e^{x^2/4}\sum_{k=0}^N A_k D_{\frac{e}{2}-k}(x)=e^{x^2/4}\sum_{k=0}^N A_k D_{(\theta_\infty-\theta_0-1)-k}(x),\quad x=(b-2z)/\sqrt{2}
	\end{equation}
when $e=2N$ and $c=N$ respectively. See also \cite{ronv}. The condition $e=2N$ corresponds precisely the condition that $\gamma-\alpha-2=2N$ obtained by  Duval and Loday \cite[Prop. 13]{Duval_Loday1992}. Indeed, when $e=2N$, the sum \eqref{E:hautot_sum} reduces to
	\begin{equation}\label{E:hautot_sum_2}
		y(x)=\sum_{k=0}^N A_k H_{N-k}(x),\qquad x=(b-2z)/\sqrt{2},
	\end{equation}
where $H_k(x)$ denotes the Hermite polynomial of degree $k$, where, as we shall show below, that the coefficients $A_k$ satisfy a three-term recursion. Motivated by Hautot's work, we extend Hautot's finite sum \eqref{E:hautot_sum} into an infinite sum \eqref{E:infinite_sum}.
We shall show in \S\ref{S:Sums} with vigorous justification that this infinite sum does converge in any compact set in a half-plane under some mild condition on $b=-\beta$. Moreover that the infinite sum solution terminates exactly to \eqref{E:hautot_sum} or \eqref{E:hautot_sum_2} according to
	\begin{equation}
		\label{E:hautot_criteria}
			c=\alpha+1=2\theta_0=-N,\quad\textrm{and}\quad e=\gamma-\alpha-2=2(\theta_\infty-\theta_0-1)=2N
	\end{equation}
respectively, where  $N$ is a non-negative integer.  These two ``termination'' conditions \textit{essentially but not exactly} match those described in Theorem \ref{T:BHC-degenerate-3} (i) and (ii) respectively. Both types of finite-sums are due to Hautot \cite{hautot_1969, hautot_1971}.  See also \cite{ronv}. When $c=\alpha+1=2\theta_0=-N$ we find a linearly independent solution by replacing the parabolic  cylinder functions in
\eqref{E:hautot_sum} by parabolic cylinder functions of the second kind (Theorem \ref{T:gauge}).

We would like to recast Hautot's results in our invariant subspace framework in which the BHE \eqref{E:BHE} admits eigen-solutions when the parameter $\delta$, which plays the role of accessory parameter, as shown below, is appropriately chosen.   We first prove the following key lemma for this construction.

 \subsection{Construction of invariant subspaces}\label{S:construction-subspace}

 We shall show that one can consider those finite sums as certain eigen-solutions of the BHE when interpreted as a subspace in the vector space to be defined below. Moreover, we shall see that these finite form solutions, irrespective of which type, lie in the Picard-Viessot extension of the operator $\partial^2+(x^2+\alpha)$. We define
 	\[
 		\mathcal{I}_N:=\Big\{
 		e^{x^2/4}\sum_{k=0}^N A_k D_{e/2-k}(x):\ A_k\in\mathbb{C},\ x=-\sqrt{2}
 		(z-b/2)\Big\}
 	\]
and
\[
 		\mathcal{J}_N:=\Big\{
 		e^{x^2/4}\sum_{k=0}^N A_k E_{e/2-k}(x):\ A_k\in\mathbb{C},\ x=-\sqrt{2}
 		(z-b/2)\Big\}
 	\]where $e/2=N$ is an integer. Obviously, the $\mathcal{I}_N$ and $\mathcal{J}_N$ are vector-subspaces of the field of the Picard-Viessot extension of the operator $\partial^2+(x^2+\alpha)$.

\begin{theorem}\label{T:invariant_subspace} Let
	\begin{equation}\label{E:eigen-operator}
		-Lf(x)=(b-\sqrt{2}x)\,f^{\prime\prime}+\big(\sqrt{2}x^2-bx-\sqrt{2}c\big)\, f^\prime(x)+\big(-\frac{e}{\sqrt{2}}\,x+\frac{b\,e}{2}\big)\, f(x).
	\end{equation}
If either $e=2N$ or $c=-N$ for some integer $N$, then $L\mathcal{I}_N\subset \mathcal{I}_N$ and $L\mathcal{J}_N\subset \mathcal{J}_N$.
\end{theorem}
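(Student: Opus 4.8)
The plan is to reduce the invariance statement to a single contiguous computation on the natural spanning set of $\mathcal{I}_N$, namely the prefactored functions $w_\nu(x):=e^{x^2/4}D_\nu(x)$ with $\nu=e/2-k$, $0\le k\le N$ (and $E_\nu$ in place of $D_\nu$ for $\mathcal{J}_N$). First I would record that, under the substitution $z=(b-\sqrt2\,x)/2$, the operator $-L$ in \eqref{E:eigen-operator} is exactly the \eqref{E:BHE} operator with its constant term $d$ removed; hence $Lf=df$ is equivalent to $f$ solving the BHE, which explains why $\delta$ (equivalently $d$) plays the role of accessory parameter. Invariance then amounts to showing that $L$ acts \emph{tridiagonally} on $\{w_{e/2-k}\}_{k=0}^N$ and that the two would-be boundary terms either vanish or fall back into the span.

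The computational heart is two identities for the prefactored functions. Using the parabolic cylinder recurrences $D_\nu'=-\tfrac{x}{2}D_\nu+\nu D_{\nu-1}$ and $D_{\nu+1}-xD_\nu+\nu D_{\nu-1}=0$ (Appendix B), the factor $e^{x^2/4}$ collapses the derivative to the clean form
\[
	w_\nu'=\nu\, w_{\nu-1},\qquad x\,w_\nu=w_{\nu+1}+\nu\, w_{\nu-1},
\]
and, after eliminating $D_\nu''$ via the parabolic cylinder equation, $w_\nu''=\nu x\,w_{\nu-1}-\nu w_\nu$. Substituting these into \eqref{E:eigen-operator} I expect the $x^2 w_{\nu-1}$ contributions from $f''$ and $f'$ to cancel, as well as the $x\,w_{\nu-1}$ contributions, leaving the three-term expression
\[
	-L w_\nu=\frac{2\nu-e}{\sqrt2}\,w_{\nu+1}-\frac{b(2\nu-e)}{2}\,w_\nu+\frac{\nu(2\nu-e-2c)}{\sqrt2}\,w_{\nu-1}.
\]

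With $\nu=e/2-k$ one has $2\nu-e=-2k$, so this becomes
\[
	-L w_{e/2-k}=-\sqrt2\,k\,w_{e/2-k+1}+bk\,w_{e/2-k}-\sqrt2\,(\tfrac{e}{2}-k)(k+c)\,w_{e/2-k-1}.
\]
The upward boundary term (the case $k=0$, which produces $w_{e/2+1}\notin\mathcal{I}_N$) carries the factor $k$ and so vanishes automatically, while the downward boundary term (the case $k=N$, which produces $w_{e/2-N-1}\notin\mathcal{I}_N$) carries the factor $(\tfrac{e}{2}-N)(N+c)$, which vanishes precisely when $e=2N$ or $c=-N$. In either case every $-Lw_{e/2-k}$ lies in $\mathrm{span}\{w_{e/2-j}\}_{j=0}^N=\mathcal{I}_N$, giving $L\mathcal{I}_N\subset\mathcal{I}_N$.

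Finally, $\mathcal{J}_N$ is handled by the identical argument once I note that the second-kind functions $E_\nu$ obey the same recurrence and contiguous relations and the same parabolic cylinder equation as $D_\nu$ (Appendix B); the computation above uses only these relations and never the specific normalization of $D_\nu$, so it transfers verbatim. The main difficulty I anticipate is bookkeeping rather than conceptual: I must confirm that the $x^2 w_{\nu-1}$ terms arising from $f''$ and $f'$ cancel exactly and that the surviving $x\,w_\nu$ term recombines correctly via $x\,w_\nu=w_{\nu+1}+\nu w_{\nu-1}$, and I must check that $E_\nu$ genuinely satisfies the \emph{same} three relations with the same coefficients so that no stray boundary term survives. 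A secondary point is that in the case $c=-N$ the index $e/2$ need not be an integer, so I should read the spanning sets of $\mathcal{I}_N,\mathcal{J}_N$ as $\{e^{x^2/4}D_{e/2-k}\}_{k=0}^N$ (resp.\ with $E$) for general $e/2$, rather than assuming $e/2=N$.
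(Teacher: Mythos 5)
Your proof is correct and follows essentially the same route as the paper's: both apply $L$ to the spanning functions $e^{x^2/4}D_{e/2-k}$, use the Appendix B recurrences to obtain a tridiagonal action, and observe that the only term leaving the span carries the coefficient $-\sqrt{2}\big(\tfrac{e}{2}-k\big)(c+k)$, which vanishes at $k=N$ precisely when $e=2N$ or $c=-N$ (your explicit tridiagonal coefficients agree with the paper's computation, and the same argument transfers to $E_\nu$ for $\mathcal{J}_N$). Your write-up is slightly tidier in that it records the collapsed identities $w_\nu'=\nu w_{\nu-1}$, $xw_\nu=w_{\nu+1}+\nu w_{\nu-1}$ up front, checks the upward boundary term at $k=0$ explicitly, and correctly notes that $e/2$ need not be an integer when $c=-N$, but these are refinements of, not departures from, the paper's proof.
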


\begin{proof} It is sufficient to prove that
	\[
		L\big[e^{x^2/4} D_{\frac{e}{2}-k}(x)\big] \subset \textrm{span\ }
		\big\{e^{x^2/4} D_{\frac{e}{2}-k}(x)\big\}_{k=0}^N
		\]
for $0\le k\le N$.
We apply the formula \eqref{A:bateman_1}, \eqref{E:derived_parabolic_1} and \eqref{E:derived_parabolic_2} to obtain
	\begin{equation}\label{E:invariant_1}
		\begin{split}	
		(b-\sqrt{2}x) \frac{d^2}{dx^2}\Big[ e^{x^2/4} D_{\frac{e}{2}-k}(x)\Big]
		&=
		\big(\frac{e}{2}-k\big)\big(\frac{e}{2}-k-1\big)\Big[b e^{x^2/4} D_{\frac{e}{2}-k-2}(x)
		-\sqrt{2}x e^{x^2/4} D_{\frac{e}{2}-k-2}(x)\Big]\\
		&= \big(\frac{e}{2}-k\big) \big(\frac{e}{2}-k-1\big) e^{x^2/4}
		\Big[bD_{\frac{e}{2}-k-2}-\sqrt{2}
		\big(D_{\frac{e}{2}-k-1}(x)\\
		&\quad + \big(\frac{e}{2}-k-2\big) D_{\frac{e}{2}-k-3}\big)
		\Big]\\
		&=-\sqrt{2}\big(\frac{e}{2}-k\big) \big(\frac{e}{2}-k-1\big)  \big(\frac{e}{2}-k-2\big)e^{x^2/4} D_{\frac{e}{2}-k-3}\\
		&\quad +b\big(\frac{e}{2}-k\big) \big(\frac{e}{2}-k-1\big) e^{x^2/4} D_{\frac{e}{2}-k-2}(x)\\
		&\quad -\sqrt{2}\big(\frac{e}{2}-k\big)\big(\frac{e}{2}-k-1\big)e^{x^2/4} D_{\frac{e}{2}-k-1}(x).
	\end{split}
		\end{equation}
Similarly, we have
	\begin{equation}\label{E:invariant_2}
		\begin{split}
			\big(\sqrt{2}x^2-bx-\sqrt{2}c\big) &	\frac{d}{dx}\big[ e^{x^2/4} D_{\frac{e}{2}-k}(x)]
			=
			\big(\sqrt{2}x^2-bx-\sqrt{2}c\big) \big(\frac{e}{2}-k\big) e^{x^2/4}
D_{\frac{e}{2}-k-1}(x)\\
			&=\big(\frac{e}{2}-k\big)\, e^{x^2/4}
			\Big[\sqrt{2}
			x\Big(D_{\frac{e}{2}-k}+\big(\frac{e}{2}-k-1\big)D_{\frac{e}{2}-k-2}\Big)\\
			&\qquad -b \Big(D_{\frac{e}{2}-k}+\big(\frac{e}{2}-k-1\big)D_{\frac{e}{2}-k-2}\Big)-\sqrt{2}cD_{\frac{e}{2}-k-1}\Big]\\
			&= \big(\frac{e}{2}-k\big) e^{x^2/4}
			\Big[\sqrt{2}\Big(D_{\frac{e}{2}-k+1}+\big(\frac{e}{2}-k\Big)\, D_{\frac{e}{2}-k-1}\Big)\\
			&\qquad +\sqrt{2}\big(\frac{e}{2}-k-1\big)\Big(D_{\frac{e}{2}-k-1}+\big(\frac{e}{2}-k-2\big)D_{\frac{e}{2}-k-3}\Big)\\
			& \qquad -b \Big(D_{\frac{e}{2}-k}+\big(\frac{e}{2}-k-1\Big)D_{\frac{e}{2}-k-2}\big)-\sqrt{2}\,c\,D_{\frac{e}{2}-k-1}\Big]\\
			&=e^{x^2/4}\big(\frac{e}{2}-k\big)\Big\{\Big[ \sqrt{2}\big(\frac{e}{2}-k-1\big)+\sqrt{2}\big(\frac{e}{2}-k\big)-\sqrt{2} c\Big]
D_{\frac{e}{2}-k-1}
			\\
		&\qquad -b\big(\frac{e}{2}-k-1\big)D_{\frac{e}{2}-k-2} +\sqrt{2}D_{\frac{e}{2}-k+1}-bD_{\frac{e}{2}-k}\\
			&\qquad
			+\sqrt{2}\big(\frac{e}{2}-k-1\big))\big(\frac{e}{2}-k-2\big)D_{\frac{e}{2}-k-3}
			\Big\}\\
		\end{split}
	\end{equation}
and
	\begin{equation}\label{E:invariant_3}
		\begin{split}
			\Big(-\frac{e}{\sqrt{2}}\, x+\frac{be}{2}\Big)e^{x^2/4}D_{\frac{e}{2}-k}
			&=
			\big(-\frac{e}{\sqrt{2}}\big) \Big(D_{\frac{e}{2}-k+1}+\big(\frac{e}{2}-k\big)
			D_{\frac{e}{2}-k-1}\Big)e^{x^2/4}\\
			&\qquad + \frac{be}{2} e^{x^2/4}D_{\frac{e}{2}-k}\\
			&=\Big[\big(-\frac{e}{\sqrt{2}}\big) D_{\frac{e}{2}-k+1}-
			\big(-\frac{e}{\sqrt{2}}\big) \big(\frac{e}{2}-k\big) D_{\frac{e}{2}-k-1}+
			\frac{be}{2}
			D_{\frac{e}{2}-k}\Big]e^{x^2/4}
		\end{split}
	\end{equation}
			
We see from combining the expressions \eqref{E:invariant_1}, \eqref{E:invariant_2} and \eqref{E:invariant_3} that
	\begin{equation}\label{E:invariant_4}
		\begin{split}
			L \big[e^{x^2/4} D_{\frac{e}{2}-k}(x)\big]				
			&=
			-\sqrt{2}\big(\frac{e}{2}-k\big)(c+k)\, e^{x^2/4} D_{\frac{e}{2}-k-1}(x)+\\
			&\qquad +\big\{\textrm{linear combination of terms involving\ }
			e^{x^2/4}D_{\frac{e}{2}-j-1}(x)\textrm{\ with\ } j\le k\big\}\\
			&\in  \mathrm{Span}_\mathbb{C}  \big\{e^{x^2/4} D_{\frac{e}{2}-j}:\  0\le j\le N \big\}
		\end{split}
	\end{equation}
provided that the factor of $D_{\frac{e}{2}-k-1}(x)$ in \eqref{E:invariant_3} vanishes when $k=N$. Since this holds for each $k\, (0\le k\le N)$.
That is, either $e=2N$ or $c=-N$. So we have proved $L\mathcal{I}_N\subset \mathcal{I}_N$. Since the parabolic cylinder function of the second kind $E_\nu(x)$ defined and satisfies all the same recursion formulae listed in Appendix B, so the proof of $L\mathcal{J}_N\subset \mathcal{J}_N$ becomes verbatim to the above proof which we skip.
\end{proof}

The theorem shows that the operator $L$
is a linear transformation of a finite dimensional vector spaces $\mathcal{I}_N$ and $\mathcal{J}_N$ . So the Theorem \ref{T:invariant_subspace} discussed above can be considered as proper eigen-value problem $Lf=d\, f$ for $f\in\mathcal{I}_N$ and $\mathcal{J}_N$  must have a solution when either $e=2N$ or $c=N$ for some integer $N$. Indeed, it follows from the proof of the above Theorem \ref{T:invariant_subspace} that there exists $N+1$ eigen-solutions $f_k$ and $g_k$ each corresponds to an eigenvalue $d_k, \ k=0,\, 1,\, \cdots, N$ in $\mathcal{I}_N$ and $\mathcal{J}_N$ and respectively.

\subsection{Invariant subspaces and correspondence with $P_\mathrm{IV}$}

The last subsection puts Hautot's sums and our sum for parabolic cylinder functions of second kind as proper eigenvalue problems in  suitably defined vector spaces. The ranges of $\alpha$ and $\gamma$ (resp. $\theta_0$ and $\theta_\infty$) that are allowed in \eqref{E:hautot_criteria} are \textit{incomplete} when compared to those
described in Theorem \ref{T:BHC-degenerate-3}, namely, $\alpha$ and $\gamma$ (resp. $\theta_0$ and $\theta_\infty$) that are allowed to vary amongst all possible positive and negative values. The following Figure 1 depicts the ranges of $\gamma$ and $\alpha$ in a graphical manner.

\begin{figure}[ht]
			\centering
			\includegraphics[width=0.45\textwidth]{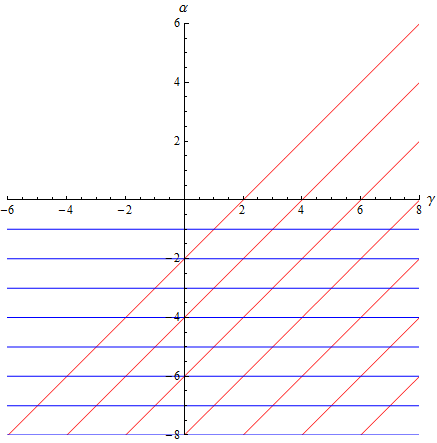}
			\caption{The above shows that $\gamma=-1,\, -2,\, -3,\, \cdots$, and $\gamma-\alpha-2=2,\, 4,\, 6,\, \cdots$ in the $(\gamma,\, \alpha)-$plane}
		\end{figure}
\bigskip

The collection of all $(\alpha,\, \gamma)$ from $\gamma=-1,\, -2,\, -3,\, \cdots$, and $\gamma-\alpha-2=2,\, 4,\, 6,\, \cdots$ in the $(\gamma,\, \alpha)-$plane are a consequence of the degeneration of monodromy/Stokes multipliers of the \eqref{E:BHC}. We shall show that apart from three-exceptions one can ``tile up" the whole  $(\gamma,\, \alpha)-$plane by applying the symmetry of the \eqref{E:BHEC} as the follow Figure 2.
\begin{figure}[ht]
			\centering
			\includegraphics[width=0.45\textwidth]{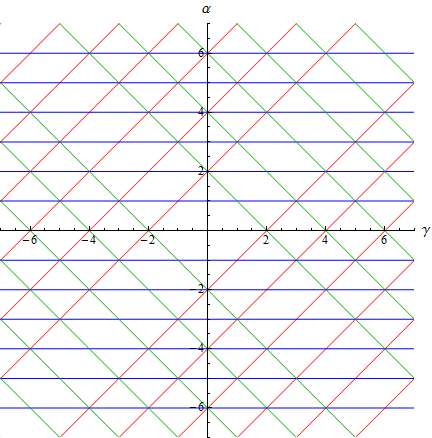}
			\caption{This figure shows that degeneration of monodromy/Stokes multiplier of \eqref{E:BHEC} in the $(\gamma,\, \alpha)-$plane}
		\end{figure}
\bigskip

One can eventually ``recover'' the ``missing three lines'' of the above figure by considering the degeneration of monodromy/Stokes multiplers of the BHC \eqref{E:BHC} so that there is a complete correspondence between the $(\alpha,\, \gamma)$ of \eqref{E:BHE} and $(\xi,\, \eta)$ of the Painlev\'e IV equation.
\begin{figure}[ht]
			\centering
			\includegraphics[width=0.45\textwidth]{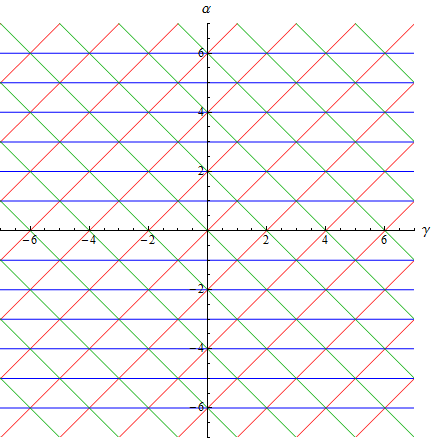}
			\caption{This figure shows that degeneration of monodromy/Stokes multiplier of \eqref{E:BHC} in the $(\gamma,\, \alpha)-$plane}
		\end{figure}

We first derive the three-term recursion of the coefficients $A_k$
that appears in the formal infinite sum \eqref{E:infinite_sum}.


\begin{theorem}\label{T:formal-series}
Let $f(x)$ be the formal series solution defined in \eqref{E:infinite_sum}  to
	\begin{equation}\label{E:operator-eqn}
		Lf=df,
	\end{equation}
where $L$ is the operator \eqref {E:eigen-operator}.  Then the coefficients $A_n$
 satisfies the recurrence relation
 \begin{equation} \label{E:3-recursion}
 \al_n A_{n+1}+\be_n A_n +\ga_n A_{n-1}=0,
 \end{equation}
 where
 \begin{equation}\label{E:3-term}
 \al_n=-\sqrt{2}(n+1),\quad \be_n=d+bn,\quad
 \ga_n=-\sqrt{2}(n+c-1)(\frac{e}{2}-n+1).
 \end{equation}
 Moreover, $A_0$ is arbitrary and $\d A_1=\frac{d}{\sqrt{2}}A_0$. Moreover, the formal expansion
	\begin{equation}\label{E:2nd-expansion}
		g(x)=e^{x^2/4}\sum_{k=0}^\infty A_kE_{\frac{e}{2}-k}(x)=
		e^{x^2/4}\sum_{k=0}^\infty A_kE_{(\theta_\infty-\theta_0-1-k)}(x),
	\end{equation}
where the $E_\nu(x)$ is defined in \eqref{E:parabolic_2nd_soln},
serves as a linear independent solution to the operator equation \eqref {E:operator-eqn} and the coefficient $A_k$ also satisfy the same recursions \eqref{E:3-recursion} and \eqref{E:3-term}.
\end{theorem}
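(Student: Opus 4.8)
The plan is to substitute the formal expansion \eqref{E:infinite_sum} into the operator equation \eqref{E:operator-eqn} and to extract the recursion by comparing coefficients of the individual building blocks $e^{x^2/4}D_{e/2-n}(x)$. The essential computational ingredient is already in hand: the proof of Theorem \ref{T:invariant_subspace}, through the three identities \eqref{E:invariant_1}, \eqref{E:invariant_2} and \eqref{E:invariant_3}, records the action of the three pieces of $L$ on a single term $e^{x^2/4}D_{e/2-k}(x)$. First I would reassemble those three identities into a single closed expression for $L\big[e^{x^2/4}D_{e/2-k}(x)\big]$, retaining every coefficient rather than only the leading one displayed in \eqref{E:invariant_4}.

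The key structural observation — and the step I would verify most carefully — is that, writing $\nu=\tfrac e2-k$, when the three contributions are added the coefficients of $e^{x^2/4}D_{e/2-k-2}(x)$ and of $e^{x^2/4}D_{e/2-k-3}(x)$ cancel identically (the $b\nu(\nu-1)$ pieces of \eqref{E:invariant_1} and \eqref{E:invariant_2} cancel, as do the $\sqrt2\,\nu(\nu-1)(\nu-2)$ pieces). What survives is a genuine three-term action $L\big[e^{x^2/4}D_{e/2-k}\big]=-\sqrt2\,k\,e^{x^2/4}D_{e/2-k+1}+bk\,e^{x^2/4}D_{e/2-k}-\sqrt2\,\big(\tfrac e2-k\big)(c+k)\,e^{x^2/4}D_{e/2-k-1}$, up to the global sign dictated by the convention $-L=\cdots$ in \eqref{E:eigen-operator}. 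Since the coefficient of the ``upward'' neighbour $D_{e/2-k+1}$ is proportional to $k$, it vanishes at $k=0$, so no term of the series escapes above $D_{e/2}$ and the formal expansion is internally consistent.

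With this closed form I would substitute the series into $Lf=df$. The coefficient of $e^{x^2/4}D_{e/2-n}(x)$ receives contributions only from the three indices $k=n-1,n,n+1$, together with the term $dA_n$ coming from the right-hand side $df$; setting it to zero yields $\alpha_n A_{n+1}+\beta_n A_n+\gamma_n A_{n-1}=0$ with exactly the stated $\alpha_n,\beta_n,\gamma_n$. To pass from ``the two formal series agree'' to ``each coefficient vanishes'' I would invoke the linear independence of the functions $e^{x^2/4}D_{e/2-n}(x)$, which is immediate from their distinct leading orders $x^{e/2-n}$ as $x\to+\infty$; since each output coefficient collects only finitely many input terms, this comparison is legitimate even for a merely formal series. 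The boundary case $n=0$, with $A_{-1}$ absent, reduces to $-\sqrt2\,A_1+d\,A_0=0$, i.e. $A_1=\tfrac{d}{\sqrt2}A_0$, with $A_0$ free.

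For the second assertion I would observe that the parabolic cylinder functions of the second kind $E_\nu$ obey the very same recurrence and differentiation formulas \eqref{A:bateman_1}, \eqref{E:derived_parabolic_1}, \eqref{E:derived_parabolic_2} used above (this is noted in Appendix B and already exploited for $\mathcal J_N$ in Theorem \ref{T:invariant_subspace}). Consequently the computation of $L\big[e^{x^2/4}E_{e/2-k}\big]$ is verbatim the one for $D$, the coefficients of \eqref{E:2nd-expansion} satisfy the identical recursion \eqref{E:3-recursion}--\eqref{E:3-term}, and $g$ solves \eqref{E:operator-eqn}. Linear independence of $f$ and $g$ then follows from that of $D_\nu$ and $E_\nu$ (nonvanishing Wronskian, equivalently their distinct dominant/recessive behaviour at the irregular singular point), so the two towers span the two-dimensional solution space of the second-order equation \eqref{E:operator-eqn}. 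I expect the only real friction to be the sign bookkeeping forced by the $-L$ convention and the verification of the two cancellations; everything else is formal and mechanical.
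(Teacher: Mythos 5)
Your proposal is correct and follows essentially the same route as the paper: both substitute the expansion into $Lf=df$ and compare coefficients of $e^{x^2/4}D_{e/2-n}(x)$ via the differential--difference identities \eqref{A:bateman_1}, \eqref{E:derived_parabolic_1}--\eqref{E:derived_parabolic_2}, arriving at exactly the three-term relation \eqref{E:critical-term}, i.e.\ \eqref{E:3-recursion}--\eqref{E:3-term}, with the $E_\nu$ case handled ``verbatim'' just as you do; your claimed cancellations of the $D_{e/2-k-2}$ and $D_{e/2-k-3}$ contributions and the resulting closed-form three-term action of $L$ (modulo the flagged $-L$ sign convention) check out. The only cosmetic difference is that you reassemble the identities \eqref{E:invariant_1}--\eqref{E:invariant_3} from the proof of Theorem \ref{T:invariant_subspace} rather than substituting the series directly into the scalar equation as the paper does.
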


 \begin{proof}
We first note that the parabolic cylinder function $D_\nu$ satisfies the differential equation
 \begin{equation}\label{E:parabolic}
  D_\nu''(x)=(\frac{1}{4}x^2-\nu-\frac{1}{2})D_\nu(x).
  \end{equation}

We substitute the formal sum \eqref{E:infinite_sum} into the equation \eqref{E:eigen-equation-2} yields
\begin{eqnarray}
 \lefteqn{
 (b-\sqrt{2}x)\sum_{0}^\infty A_n
 D_{\frac{e}{2}-n}''(x)-\sqrt{2}c\sum_{0}^\infty A_n
 D_{\frac{e}{2}-n}'(x)}\\
 &&+\left(
 (b-\sqrt{2}{x})(\frac{1}{2}-\frac{x^2}{4})-\frac{e+c}{\sqrt{2}}x+d+\frac{b
 e}{2}\right)\, \sum_{0}^\infty A_n D_{\frac{e}{2}-n}(x)=0.
 \label{eq2.2}
 \end{eqnarray}

 Then by \eqref{E:derived_parabolic_1} and \label{E:derived_parabolic_2}, we have, after simplification,
 $$
 \sum_{n=0}^\infty \left( -\sqrt{2}n A_n
 D_{\frac{e}{2}-n+1}(x)+(d+bn)A_n
 D_{\frac{e}{2}-n}(x)-\sqrt{2}(n+c)(\frac{e}{2}-n) A_n
 D_{\frac{e}{2}-n-1}(x)\right)=0
 $$
 Let $k=n-1\geq 1$. We compare the coefficients of $\d
 D_{\frac{e}{2}-k}(x)$ and thus obtain the recurrence relation
 	\begin{equation}\label{E:critical-term}
 -\sqrt{2}(k+1)A_{k+1}+(d+bk)A_k-\sqrt{2}(c+k-1)(\frac{e}{2}-k+1)A_{k-1}=0.
 	\end{equation}
 Thus \eqref{E:3-recursion} and \eqref{E:3-term} are valid.
 It is also clear that $A_0$ is arbitrary, and comparing coefficients of
 $D_\frac{e}{2}(x)$ gives $\d A_1=\frac{d}{\sqrt{2}}A_0$.
 \begin{flushright}
 \end{flushright}

 As for the second linearly independent expansion \eqref{E:2nd-expansion} it is sufficient to note that the $E_\nu(x)$ satisfies the same recursions
	\begin{eqnarray}
  2 E_\nu'(x)&=&-E_{\nu+1}(x)+\nu E_{\nu}(x),\label{E:derived_parabolic_3}\\
  x E_\nu(x) &=& E_{\nu+1}(x)+\nu E_{\nu-1}(x)\label{E:derived_parabolic_4}
  \end{eqnarray}
as  those for $D_\nu(x)$  in \eqref{E:derived_parabolic_1} and \eqref{E:derived_parabolic_2} respectively. The remaining steps in verifying the expansion \eqref{E:2nd-expansion} indeed satisfies the \eqref{E:operator-eqn} with the help of \eqref{E:derived_parabolic_3} and \eqref{E:derived_parabolic_4} are the same to those of the first expansion \eqref{E:infinite_sum} just verified above. So we skip the details.
\end{proof}

The cases when $N\ge 1$ in the following theorem were due to Hautot. Since they are not well-known, so we shall reproduce  the derivation of the two types of solutions from our new prospective and in a consolidated manner. In particular, we shall obtain those solutions for different combination of signs of $\alpha,\, \beta,\, \gamma,\, \delta$ by the symmetry of the BHE.

\begin{theorem} \label{T:invariant_map}
Let $\mathcal{L}: \mathcal{I}_N\longrightarrow \mathcal{I}_N$ be defined by
	\begin{equation}\label{E:eigen-equation}
		\mathcal{L}f(x)=xf^{\prime\prime}(x)+(1+\alpha-\beta x-2x^2)f^\prime(x)+[(\gamma-\alpha-2)x]f(x).	
	\end{equation}
If either
	\begin{equation}\label{E:new_criteria}
	\mathrm{(i)\quad }	\alpha\pm \gamma =2N\quad (N\in\mathbb{Z}\backslash\{0\})\quad\textrm{or}\quad
	\mathrm{(ii)\quad }	\alpha = N \quad (N\in\mathbb{Z}\backslash\{0\}),
	\end{equation}
then for each non-zero integer $N$ and each of the following four cases, there is an aggregate of $N+1$ eigenvalues $\delta$, and the
	\begin{equation}\label{E:eigen-equation-2}
		\mathcal{L} f= -df
, \qquad d=-\frac12\big(\delta+ (1+\alpha)\beta\big)
	\end{equation}
admits an aggregate of $N+1$ eigen-solutions, respectively, of the form
	\begin{enumerate}
		\item[(I)] using $\phi_1(x)$ in Theorem \ref{T:BHE-symmetry},
	\begin{equation}\label{E:hautot_sum_3}
		f_j(x)=e^{x^2/4}\sum_{k=0}^N A_{k,\, j}\, D_{\frac{e}{2}-k}(x),
		\quad 0\le j\le N
	\end{equation}
where $e=\gamma-\alpha-2=2N$ or $1+\alpha = -N\ (N\ge 1)$ and the $A_{k,\, j}=A_{k,\, j} (\alpha,\, \beta,\, \gamma,\, \delta_j)$ are given by \eqref {E:3-recursion},
or
	\item[(II)] using $\phi_2(x)$ in Theorem \ref{T:BHE-symmetry},
	\begin{equation}\label{E:hautot_sum_4}
		f_j(x)
		=x^{-\alpha}e^{x^2/4}\sum_{k=0}^N A_{k,\, j}\, D_{\frac{e}{2}-k}(x),
		\quad 0\le j\le N
	\end{equation}
where $e=\gamma+\alpha-2=2N$, $1-\alpha=-N \ (N\ge 1)$, the $A_{k,\, j}=A_{k,\, j} (-\alpha,\, \beta,\, \gamma,\, \delta_j)$ are given by \eqref {E:3-recursion}
or
	\item[(III)] using $\phi_5(x)$ in Theorem \ref{T:BHE-symmetry},
	\begin{equation}\label{E:hautot_sum_5}
		f_j(x)
		=e^{\beta x+3x^2/4}\sum_{k=0}^N A_{k,\, j}\, D_{\frac{e}{2}-k}(x),
	\end{equation}
$0\le j\le N$  where $e=-\gamma-\alpha-2=2N$, $1+\alpha=-N \ (N\ge 1)$, $A_{k\, j}=A_{k,\, j} (\alpha,\, i\beta,\, -\gamma,\, -i\delta_j)$
or
	\item[(IV)] using $\phi_7(x)$ in Theorem \ref{T:BHE-symmetry},
	\begin{equation}\label{E:hautot_sum_6}
		f_j(x)
		=x^{-\alpha}e^{\beta x+3x^2/4}\sum_{k=0}^N A_{k,\, j}\, D_{\frac{e}{2}-k}(x),
	\end{equation}
$0\le j\le N$  where $e=-\gamma+\alpha-2=2N$, $1-\alpha=-N \ (N\ge 1)$,  $A_{k,\, j}=A_{k,\, j} (-\alpha,\, i\beta,\, -\gamma,\, -i\delta_j)$
\end{enumerate}
where each set of coefficients $A_k$, of their respective choices of $\alpha,\, \beta,\, \gamma,\, \delta$, satisfies a three-term recursion \eqref{E:3-term}.

	 Moreover, if $\alpha+1>0$ in the four cases above, then the $n+1$ respectively eigenvalues are necessarily real and distinct.
 \end{theorem}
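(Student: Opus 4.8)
The plan is to recast the eigenvalue equation \eqref{E:eigen-equation-2} as a finite-dimensional matrix problem on $\mathcal{I}_N$ and to recognise the resulting matrix as a Jacobi (symmetrisable tridiagonal) matrix, whose spectrum is automatically real and simple. I would treat case (I) in detail; the remaining cases (II)--(IV) are obtained from it through the symmetry transforms $\phi_2,\phi_5,\phi_7$ of Proposition \ref{T:BHE-symmetry}, which merely replace the quadruple $(\al,\be,\ga,\de)$ by the transformed quadruples recorded in the statement, after which the identical argument applies with the appropriate $c$ in place of $\al+1$.

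First I would write a prospective eigensolution as $f=e^{x^2/4}\sum_{k=0}^N A_k D_{e/2-k}(x)\in\mathcal{I}_N$ and use the three-term recursion \eqref{E:3-recursion}--\eqref{E:3-term} of Theorem \ref{T:formal-series}, in which the accessory parameter enters only through $\be_n=d+bn$. Isolating this parameter, \eqref{E:eigen-equation-2} becomes the linear system
\[
-\al_n A_{n+1}-b n\,A_n-\ga_n A_{n-1}=d\,A_n,\qquad 0\le n\le N,
\]
with the conventions $A_{-1}=A_{N+1}=0$. Because $e/2=N$ in case (I), the coefficient $\ga_{N+1}$ of the would-be overflow term vanishes --- this is precisely the invariance already established in Theorem \ref{T:invariant_subspace} --- so the system closes to an honest $(N+1)\times(N+1)$ tridiagonal matrix $J$, with diagonal entries $-bn=\be n$, superdiagonal $-\al_n$ and subdiagonal $-\ga_n$, whose eigenvalues are exactly the admissible values of $d$.

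The decisive computation is the paired off-diagonal product of $J$,
\[
\al_n\,\ga_{n+1}=\big(-\sqrt{2}(n+1)\big)\big(-\sqrt{2}(n+c)(N-n)\big)=2(n+1)(n+c)(N-n),
\]
where $c=\al+1$ in case (I). For $0\le n\le N-1$ the factors $n+1$ and $N-n$ are strictly positive, and the middle factor $n+c=n+\al+1$ is strictly positive exactly because of the hypothesis $\al+1>0$; hence every paired off-diagonal product is strictly positive. I would then invoke the classical symmetrisation: the diagonal conjugation $D^{-1}JD$ with $D=\mathrm{diag}(d_0,\dots,d_N)$, $d_0=1$, $d_{n+1}/d_n=\sqrt{\ga_{n+1}/\al_n}$ (well defined and real, the ratio being positive) carries $J$ to a real symmetric tridiagonal matrix whose off-diagonal entries are nonzero, of modulus $\sqrt{\al_n\ga_{n+1}}$, the diagonal being real for real parameters. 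Two standard facts then finish the proof: by the spectral theorem the eigenvalues are real, and since $d=-\tfrac12\big(\de+(1+\al)\be\big)$ is affine in $\de$ with real coefficients the accessory parameters $\de_j$ are real; and a symmetric tridiagonal matrix with nonvanishing off-diagonals is nonderogatory (its leading basis vector is cyclic, equivalently its characteristic polynomial is the terminal member of the orthogonal-polynomial sequence generated by the recursion), so its $N+1$ eigenvalues are simple, whence the $\de_j$ are distinct.

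The hard part will be the bookkeeping rather than any deep step. One must confirm that the positivity of $\al_n\ga_{n+1}$ persists across the entire range $0\le n\le N-1$: this is exactly where $\al+1>0$ enters, and the condition is sharp, since a sign change of $n+c$ would destroy symmetrisability and could produce genuinely complex conjugate pairs of accessory parameters. One must also track the factors of $i$ introduced by $\phi_5,\phi_7$ in cases (III)--(IV), so as to be sure the diagonal of the relevant Jacobi matrix is real and that the reality is asserted for the correct (real) accessory parameter $\de_j$, and verify the closure of the recursion at both endpoints $n=0$ and $n=N$, which again rests on the vanishing overflow coefficient furnished by Theorem \ref{T:invariant_subspace}.
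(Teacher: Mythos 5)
Your proposal is correct, and its skeleton coincides with the paper's: both substitute the finite sum into the eigen-equation, invoke the three-term recursion \eqref{E:3-recursion}--\eqref{E:3-term} of Theorem \ref{T:formal-series}, observe that the termination condition ($e=2N$ or $c=-N$) kills the overflow coefficient so that the problem closes to an $(N+1)\times(N+1)$ tridiagonal system, and then transport case (I) to cases (II)--(IV) via $\phi_2,\phi_5,\phi_7$ of Proposition \ref{T:BHE-symmetry}. The paper phrases the closure as the vanishing of $\det\Phi$, a degree-$(N+1)$ polynomial in $d$, which is the same as your characteristic-polynomial formulation. The genuine divergence is in the final ``real and distinct'' clause: the paper disposes of it in one sentence by citing a theorem of Rovder \cite{rovder}, whereas you prove it from scratch by symmetrising the Jacobi matrix --- computing $\alpha_n\gamma_{n+1}=2(n+1)(n+c)(N-n)>0$ under $\alpha+1>0$, conjugating to a real symmetric unreduced tridiagonal matrix, and invoking the spectral theorem plus nonderogatoricity. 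Your route is longer but buys two things: it is self-contained, and it makes visible exactly where $\alpha+1>0$ enters (positivity of the factor $n+c$ for all $0\le n\le N-1$), which the citation hides. Conversely, the paper's citation is economical, but note that Rovder's theorem concerns real-coefficient equations, so --- as you yourself flag in your closing caveat --- neither the citation nor your symmetrisation extends verbatim to cases (III)--(IV), where the transformed parameters $(\alpha,i\beta,-\gamma,-i\delta_j)$ put the purely imaginary entries $i\beta n$ on the Jacobi diagonal; there the relation between reality of the matrix eigenvalue $\tilde d$ and reality of the original accessory parameter $\delta_j$ involves a factor of $i$, and the distinctness/reality assertion needs a separate argument (or a reinterpretation of which quantity is claimed to be real). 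This is a gap in the paper's own proof as much as in yours; your proposal has the merit of identifying it explicitly, and for cases (I)--(II), where all parameters stay real, your argument is complete and rigorous.
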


\begin{proof}  We substitute the formal sum \eqref{E:infinite_sum} into the equation \eqref{E:eigen-equation-2} and apply Theorem \ref{T:formal-series}. Let $\gamma-\alpha-2=2N$ or $c=\alpha+1=N$. Then the last term of \eqref{E:critical-term} vanishes and the three-term recursion \eqref {E:3-recursion} terminates provided that the determinant $\det \Phi=0$ where
$$
 \Phi=\left[
 \begin{array}{ccccccc}
  {d} & {-\sqrt{2}}  & 0 &\cdots &0\\
  -{c e}/{\sqrt{2} } & d+b & -2\sqrt{2} &\cdots &0\\
  \vdots&  \vdots&\vdots & \vdots & \vdots\\
  0 &  \cdots& {-\frac{1}{\sqrt{2}}(e-2N-4)(c+N-2)} & {d+b (N-1)} & -(N+1)\sqrt{2}\\
  0 &  \cdots &0 & -\frac{1}{\sqrt{2}}(e-2N+2)(c+N-1) & d+b N
  \end{array}
  \right]
  $$
to guarantee  a system of $N+1$ linear equations in $(A_0,\ldots,A_N)$ to have a non-trivial solution, while from \eqref{E:3-recursion}--\eqref{E:3-term}, $A_i=0$ for all $i\geq N+1$.
  Furthermore, the determinant vanishes if a ${(N+1)}$th-degree polynomial in $d$, and so has {$N+1$} roots in $d$.
  (That is why we say $d$ is an eigenvalue of (\ref{eq2.9}).) Also when $e=2N$, the solution $f$ reduces to a Hermite
  polynomial of degree $N$.

On the other hand, if $d=0=c$, then $0=A_1=A_2=\cdots$, when $f(x)= \rme^{x^2/4} D_{\frac{e}{2}}(x)$ is a solution.
  Therefore we conclude that {BHE} has a parabolic cylinder type solution if $c=-N\ (N\in \bfN\cup \{0\})$, {or
  equivalently $\al=c-1 $ is a negative integer.}  This completes the derivation of \eqref{E:hautot_sum_3}. Moreover,
 we deduce from a theorem of Rovder \cite{rovder} that if $\alpha+1>0$, then all the eigenvalues $\delta$ are real and distinct.

We now apply the symmetry of the \eqref{E:BHE} from \eqref{T:BHE-symmetry} similar from the above argument that $\phi_2(x)$, $\phi_5(x)$ and $\phi_7(x)$  assumes the forms \eqref{E:hautot_sum_4}, \eqref{E:hautot_sum_5} and \eqref{E:hautot_sum_6} for the respective restriction of $\alpha,\, \beta,\, \gamma$.
\end{proof}

\section{Invariant subspaces and apparent singularity}
\setcounter{equation}{0}

The following theorem shows that the special solutions \eqref{E:hautot_sum_3} obtained from \eqref{E:eigen-equation} can be written in terms of the pair $e^{x^2/4} D_{\frac{e}{2}-N}$ and $e^{x^2/4} \big(D_{\frac{e}{2}-N}\big)^\prime$ with polynomial coefficients via a change of basis, i.e., a gauge transformation. In particular, if $1+\alpha =-N\le 0$, then the \eqref{E:eigen-equation} can be transformed to a parabolic equation. That is, the regular singularity of \eqref{E:BHEC} at $x=0$ becomes an \textit{apparent singularity} of \eqref{E:eigen-equation} at $x=0$.

\begin{theorem}\label{T:gauge}
 \begin{enumerate}
	\item Let $e=\gamma-\alpha-2=2N$ or $c=\alpha+1=-N$ for some integer $N\ge 1$. Suppose
\begin{equation}\label{E:hautot_sum_7}
		f_j(x)=e^{x^2/4}\sum_{k=0}^N A_{k,\, j}\, D_{\frac{e}{2}-k}(x),
		\quad 0\le j\le N
	\end{equation}
are eigen-solutions to \eqref{E:eigen-equation}. Then
	\begin{equation}\label{E:gauge_down}
		f_j(x)=p_{0,\, j}(x)\, e^{x^2/4} D_{\frac{e}{2}-N}(x)+ p_{1,\, j}(x)\, e^{x^2/4}\big(D_{\frac{e}{2}-N}(x)\big)^\prime,
		\quad 0\le j\le N
	\end{equation}
for some polynomials $p_{0,\, j}(x)$ and $p_{1,\, j}(x)$. Moreover, if $\alpha+1=-N$ holds, then the \eqref{E:eigen-equation} admits a linearly independent solution $g_j(x)$ to the $f_j(x)$ so that each $D_\nu(x)$ in \eqref{E:hautot_sum_7} is replaced by $E_\nu(x)$. Each $g_j(x)\ (0\le j \le N)$ can also be written in the form
	\begin{equation}\label{E:gauge_down_b}
		g_j(x)=p_{0,\, j}(x)\, e^{x^2/4} E_{\frac{e}{2}-N}(x)+ p_{1,\, j}(x)\, e^{x^2/4}\big(E_{\frac{e}{2}-N}(x)\big)^\prime,
		\quad 0\le j\le N
	\end{equation}
for the same polynomials $p_{0,\, j}(x)$ and $p_{1,\, j}(x)$.  We deduce that either the gauge transformation \eqref{E:gauge_down} or \eqref{E:gauge_down_b}  transforms the parabolic cylinder equation
	\begin{equation}\label{E:hermite_1}
		y^{\prime\prime}+\Big[\big(\frac{e}{2}-N\big)+\frac12-\frac14x^2\Big]y=0
	\end{equation}
to the \eqref{E:eigen-equation-2}. Furthermore, there are polynomials $P_{0,\, j},\, P_{1,\, j}$ such that
	\begin{equation}\label{E:gauge_up}
		f_j(x)=P_{0,\, j}(x)\, e^{x^2/4} D_{\frac{e}{2}}(x)+ P_{1,\, j}(x)\, e^{x^2/4}\big(D_{\frac{e}{2}}(x)\big)^\prime,
\quad 0\le j\le N.
	\end{equation}
and
	\begin{equation}\label{E:gauge_up_b}
		g_j(x)=P_{0,\, j}(x)\, e^{x^2/4} E_{\frac{e}{2}}(x)+ P_{1,\, j}(x)\, e^{x^2/4}\big(E_{\frac{e}{2}}(x)\big)^\prime,
\quad 0\le j\le N.
	\end{equation}
That is, either the gauge transformation \eqref{E:gauge_up} or \eqref{E:gauge_up_b} transforms	
	\begin{equation}\label{E:hermite_2}
		y^{\prime\prime}+\Big[\frac{e+1}{2}-\frac14x^2\Big]y=0
	\end{equation}
to the \eqref{E:eigen-equation-2}.
	\item Let $e=\gamma+\alpha-2=2N$ or $c=-\alpha+1=-N$ for some integer $N\ge 1$. Then \eqref {E:hautot_sum_4} can be written as
	\begin{equation}\label{E:gauge_down_2}
		f_j(x)=p_{0,\, j}(x)\, x^{-\alpha}e^{x^2/4} D_{\frac{e}{2}-N}(x)+ p_{1,\, j}(x)\, x^{-\alpha} e^{x^2/4}\big(D_{\frac{e}{2}-N}(x)\big)^\prime,
		\quad 0\le j\le N
	\end{equation}
\eqref{E:gauge_down} for some polynomials $p_{0,\, j}(x)$ and $p_{1,\, j}(x)$. Moreover, if $-\alpha+1=-N$ holds,  then the \eqref{E:eigen-equation} admits a linearly independent solution $g_j(x)$ to each of the $f_j(x)$ with  $D_\nu(x)$ in \eqref{E:hautot_sum_7} replaced by $E_\nu(x)$. Each $g_j(x)\ (0\le j \le N)$ can also be written in the form
	\begin{equation}\label{E:gauge_down_2b}
		g_j(x)=p_{0,\, j}(x)\, e^{x^2/4} E_{\frac{e}{2}-N}(x)+ p_{1,\, j}(x)\, e^{x^2/4}\big(E_{\frac{e}{2}-N}(x)\big)^\prime,
		\quad 0\le j\le N
	\end{equation}
for the same polynomials $q_{0,\, j}(x)$ and $q_{1,\, j}(x)$.  We deduce that either the gauge transformation \eqref{E:gauge_down_2} or \eqref{E:gauge_down_2b}  transforms the parabolic cylinder equation
 \eqref{E:hermite_1}
to the \eqref{E:eigen-equation-2}. Furthermore, there are polynomials $P_{0,\, j},\, P_{1,\, j}$ such that
		\begin{equation}\label{E:gauge_up_2}
		f_j(x)=P_{0,\, j}(x)\, x^{-\alpha}e^{x^2/4} D_{\frac{e}{2}}(x)+ P_{1,\, j}(x)\, x^{-\alpha} e^{x^2/4}\big(D_{\frac{e}{2}}(x)\big)^\prime,
	\quad 0\le j\le N
	\end{equation}
and
	\begin{equation}\label{E:gauge_up_2b}
		g_j(x)=P_{0,\, j}(x)\, x^{-\alpha}e^{x^2/4} E_{\frac{e}{2}}(x)+ P_{1,\, j}(x)\, x^{-\alpha} e^{x^2/4}\big(E_{\frac{e}{2}}(x)\big)^\prime,
	\quad 0\le j\le N.
	\end{equation}
That is, either the gauge transformation \eqref{E:gauge_up_2} or \eqref{E:gauge_up_2b}
 transforms the parabolic cylinder equation \eqref{E:hermite_2} to the \eqref{E:eigen-equation-2}.
	\item Let $e=-\gamma-\alpha-2=2N$, and $\alpha+1=-N, \ (N\ge 1)$. Then the \eqref{E:hautot_sum_5} can be written as
			\begin{equation}\label{E:gauge_down_3}
		f_j(x)=p_{0,\, j}(x)\, e^{\beta x+3x^2/4} D_{\frac{e}{2}-N}(x)+ p_{1,\, j}(x)\, e^ {\beta x+3x^2/4}\big(D_{\frac{e}{2}-N}(x)\big)^\prime,
\quad 0\le j\le N.
	\end{equation}
Moreover, if $\alpha+1=-N\ (N\ge 1)$ holds,
 then the \eqref{E:eigen-equation} admits a linearly independent solution $g_j(x)$ to each of the $f_j(x)$ with  $D_\nu(x)$ in \eqref{E:hautot_sum_7} replaced by $E_\nu(x)$. Each $g_j(x)\ (0\le j \le N)$ can also be written in the form
	\begin{equation}\label{E:gauge_down_3b}
		g_j(x)=P_{0,\, j}(x)\, e^{x^2/4} E_{\frac{e}{2}-N}(x)+ P_{1,\, j}(x)\, e^{x^2/4}\big(E_{\frac{e}{2}-N}(x)\big)^\prime,
		\quad 0\le j\le N
	\end{equation}
for the same polynomials $p_{0,\, j}(x)$ and $p_{1,\, j}(x)$.  We deduce that either the gauge transformation \eqref {E:gauge_down_3}  or \eqref {E:gauge_down_3b} transforms the corresponding parabolic cylinder equation  \eqref {E:hermite_1} to \eqref{E:eigen-equation-2}. Furthermore, there are polynomials $P_{0,\, j},\, P_{1,\, j}$ such that
	 \begin{equation}\label{E:gauge_up_3}
		f_j(x)=P_{0,\, j}(x)\, e^{\beta x+3x^2/4} D_{\frac{e}{2}}(x)+ P_{1,\, j}(x)\, e^{\beta x+3x^2/4}\big(D_{\frac{e}{2}}(x)\big)^\prime,
\quad 0\le j\le N
	\end{equation}
and
	 \begin{equation}\label{E:gauge_up_3b}
		g_j(x)=P_{0,\, j}(x)\, e^{\beta x+3x^2/4} E_{\frac{e}{2}}(x)+ P_{1,\, j}(x)\, e^{\beta x+3x^2/4}\big(E_{\frac{e}{2}}(x)\big)^\prime,
\quad 0\le j\le N.
	\end{equation}
That is, the gauge transformation \eqref{E:gauge_up_3} or \eqref{E:gauge_up_3b} transforms the corresponding Hermite equation \eqref{E:hermite_2} to the \eqref{E:eigen-equation-2}.
 		\item Let $e=-\gamma+\alpha-2=2N$ and $-\alpha+1=-N\ (N\ge 1)$. Then the \eqref{E:hautot_sum_7} can be written as
			\begin{equation}\label{E:gauge_down_4}
		f_j(x)=p_{0,\, j}(x)\, x^{-\alpha}e^{\beta x+3x^2/4} D_{\frac{e}{2}-N}(x)+ p_{1,\, j}(x)\, x^{-\alpha} e^ {\beta x+3x^2/4}\big(D_{\frac{e}{2}-N}(x)\big)^\prime,
\quad 0\le j\le N.
			\end{equation}
		Moreover, if $-\alpha+1=-N\ (N\ge 1)$,  then the \eqref{E:eigen-equation} admits a linearly independent solution $g_j(x)$ to each of the $f_j(x)$ with  $D_\nu(x)$ in \eqref{E:hautot_sum_7} replaced by $E_\nu(x)$. Each $g_j(x)\ (0\le j \le N)$ can also be written in the form
	\begin{equation}\label{E:gauge_down_4b}
		g_j(x)=p_{0,\, j}(x)\, e^{x^2/4} E_{\frac{e}{2}-N}(x)+ p_{1,\, j}(x)\, e^{x^2/4}\big(E_{\frac{e}{2}-N}(x)\big)^\prime,
		\quad 0\le j\le N
	\end{equation}
for the same polynomials $p_{0,\, j}(x)$ and $p_{1,\, j}(x)$.  We deduce that either
the gauge transformation \eqref {E:gauge_down_4} or \eqref {E:gauge_down_4b} transforms the corresponding parabolic cylinder equation  \eqref {E:hermite_1} to \eqref{E:eigen-equation-2}. Furthermore, there are polynomials $P_{0,\, j},\, P_{,\, j}$ such that
	 \begin{equation}\label{E:gauge_up_4}
		f_j(x)=P_{0,\, j}(x)\,x^{-\alpha} e^{\beta x+3x^2/4} D_{\frac{e}{2}}(x)+ P_{1,\, j}(x)\, x^{-\alpha} e^{\beta x+3x^2/4}\big(D_{\frac{e}{2}}(x)\big)^\prime,
\quad 0\le j\le N
	\end{equation}
and
	 \begin{equation}\label{E:gauge_up_4b}
		g_j(x)=P_{0,\, j}(x)\,x^{-\alpha} e^{\beta x+3x^2/4} E_{\frac{e}{2}}(x)+ P_{1,\, j}(x)\, x^{-\alpha} e^{\beta x+3x^2/4}\big(E_{\frac{e}{2}}(x)\big)^\prime,
\quad 0\le j\le N.
	\end{equation}
That is, either the gauge transformation \eqref{E:gauge_up_4} and \eqref{E:gauge_up_4b} transforms the corresponding parabolic cylinder equation	\eqref{E:hermite_2} to the \eqref{E:eigen-equation-2}.
	\end{enumerate}
\end{theorem}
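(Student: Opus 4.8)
The plan is to collapse each Hautot sum to a two–term expression by means of the contiguous (ladder) relations for parabolic cylinder functions, and then to read off the resulting polynomial coefficients as the entries of a gauge matrix. The computational engine is a ladder lemma: writing $\mu:=\tfrac{e}{2}-N$, I claim that for every integer $m\ge 0$ there are polynomials $P_m,Q_m$ (depending on $\mu$) with $D_{\mu+m}(x)=P_m(x)\,D_\mu(x)+Q_m(x)\,D_\mu'(x)$. This follows by induction on $m$ from the raising relation $D_{\nu+1}=\tfrac{x}{2}D_\nu-D_\nu'$ (a rearrangement of \eqref{E:derived_parabolic_1}--\eqref{E:derived_parabolic_2}), where at each step the parabolic cylinder equation \eqref{E:parabolic} is invoked to eliminate $D_\mu''$ and keep the coefficients polynomial. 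Since $D_{\frac{e}{2}-k}=D_{\mu+(N-k)}$, substituting into \eqref{E:hautot_sum_7}, factoring out $e^{x^2/4}$, and collecting terms yields \eqref{E:gauge_down} with $p_{0,\,j}=\sum_{k}A_{k,\,j}P_{N-k}$ and $p_{1,\,j}=\sum_k A_{k,\,j}Q_{N-k}$.

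For the top–index form \eqref{E:gauge_up} I would run the same induction downward, using the lowering relation $\nu D_{\nu-1}=D_\nu'+\tfrac{x}{2}D_\nu$ to express each $D_{\frac{e}{2}-k}$ through $D_{e/2}$ and $D_{e/2}'$; the only point to verify is non-vanishing of the denominators $\tfrac{e}{2},\tfrac{e}{2}-1,\ldots$, which is automatic when $e=2N$ (so the relevant factors are $N,N-1,\ldots,1$). The assertion that the \emph{same} polynomials $p_{0,\,j},p_{1,\,j}$ also produce $g_j$ in terms of $E_\mu$ is then immediate: $E_\nu$ satisfies the very same second-order equation \eqref{E:parabolic} and the same recurrences \eqref{E:derived_parabolic_3}--\eqref{E:derived_parabolic_4} as $D_\nu$, so the ladder lemma returns literally the same $P_m,Q_m$, and by Theorem \ref{T:formal-series} the coefficients $A_{k,\,j}$ obey the same recursion in the $E$-expansion. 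This gives \eqref{E:gauge_down_b} and \eqref{E:gauge_up_b} with no change of polynomials.

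For the gauge-transformation statement I would interpret the map $y\longmapsto e^{x^2/4}\!\left(p_{0,\,j}\,y+p_{1,\,j}\,y'\right)$ on solutions of \eqref{E:hermite_1} (resp.\ \eqref{E:hermite_2}). Since $\{D_\mu,E_\mu\}$ is a basis of the solution space of the parabolic cylinder equation \eqref{E:hermite_1}, this map sends $D_\mu\mapsto f_j$ and $E_\mu\mapsto g_j$; written on the companion first-order systems it is multiplication by a $2\times2$ matrix $G(x)$ whose entries are $e^{x^2/4}$ times polynomials, i.e.\ a Schlesinger (gauge) transformation. Its determinant is, up to the exponential factor, the ratio of the Wronskians of $\{f_j,g_j\}$ and $\{D_\mu,E_\mu\}$, both non-vanishing (the former because the two solutions are independent when $1+\alpha=-N$), so $G$ is invertible and carries \eqref{E:hermite_1} to \eqref{E:eigen-equation-2}. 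Because \eqref{E:hermite_1} is regular at $x=0$ and $G$ is holomorphic there (when $1+\alpha=-N$ the prefactor $x^{-\alpha}=x^{N+1}$ is itself polynomial), the singularity of \eqref{E:eigen-equation-2} at the origin can only be apparent.

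Finally, parts (2)--(4) follow from part (1) by applying the symmetries $\phi_2,\phi_5,\phi_7$ of Proposition \ref{T:BHE-symmetry}: each multiplies a solution by the regular prefactor $x^{-\alpha}$ and/or $e^{\beta x+z^2}$ (becoming $x^{-\alpha}$ and/or $e^{\beta x+3x^2/4}$ in the $x$-variable) and flips the sign pattern of $(\alpha,\beta,\gamma,\delta)$; since these prefactors are common to every term, the ladder reduction applies verbatim and reproduces the stated formulas. The main obstacle I anticipate is the bookkeeping in the ladder lemma, precisely in guaranteeing that \emph{identical} polynomials $p_{0,\,j},p_{1,\,j}$ serve both $D_\nu$ and $E_\nu$; this rests entirely on their sharing both the differential equation and all contiguous relations. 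A secondary genuine difficulty is controlling the non-vanishing of the lowering denominators in the top-index (\eqref{E:gauge_up}, \eqref{E:hermite_2}) direction and making rigorous that the polynomial change of basis is an honest invertible gauge transformation rendering $x=0$ apparent.
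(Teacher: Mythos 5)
Your proposal is correct and follows essentially the same route as the paper: the paper likewise collapses the Hautot sum by an induction built on the contiguous relations \eqref{A:bateman_1}, \eqref{E:derived_parabolic_1}--\eqref{E:derived_parabolic_2} together with the parabolic cylinder equation \eqref{E:parabolic} (your ladder lemma $D_{\mu+m}=P_mD_\mu+Q_mD_\mu'$ is just a repackaging of its induction step on the number of terms), and it disposes of all $E_\nu$ statements exactly as you do, by noting that $E_\nu$ satisfies the identical differential-difference relations so the same polynomials appear verbatim. If anything, your write-up is more explicit than the paper's proof, which establishes only \eqref{E:gauge_down} in detail and leaves the top-index form \eqref{E:gauge_up} (where your flagged concern about non-vanishing lowering denominators is a genuine gap in the theorem's unproved edge case $c=-N$ with $e/2\in\{0,1,\dots,N-1\}$), the gauge/Wronskian interpretation, and parts (2)--(4) implicit.
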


\begin{proof} Since both the parabolic cylinder function $D_\nu(x)$ and its second kind $E_\nu(x)$ satisfy exactly the same differential-difference equations \eqref{A:bateman_1}, \eqref{A:bateman_2}, \eqref{E:derived_parabolic_1} and \eqref{E:derived_parabolic_2}, so it suffices to prove the above statement for the $D_\nu(x)$ only.

We apply induction on $k$. Let $k=1$ and we apply the two identities involving parabolic cylinder functions from \eqref{E:derived_parabolic_1} and \eqref{E:derived_parabolic_2} to yield
	\begin{equation}\label{E:first-step}
		\begin{split}			
		f(x)=&
			A_0e^{x^2/4}D_{\frac{e}{2}}+A_1 e^{x^2/4}D_{\frac{e}{2}-1}= e^{x^2/4}\big[A_0 D_{\frac{e}{2}}+A_1 D_{\frac{e}{2}-1}\big]\\
			&= e^{x^2/4}\Big[A_0\Big(\big(\frac{e}{2}-1\big)D_{\frac{e}{2}-1}-2 D_{\frac{e}{2}-1}^\prime\Big)+A_1 D_{\frac{e}{2}-1}\Big]\\
			&= e^{x^2/4}\Big[\Big(A_0\big(\frac{e}{2}-1\big)+A_1\Big) D_{\frac{e}{2}-1}-2 D_{\frac{e}{2}-1}^\prime\Big]
							\end{split}
	\end{equation}
as desired.

It follows from the classical formula \eqref{E:derived_parabolic_1} and \eqref{E:derived_parabolic_2} that one can rewrite the solution \eqref{E:hautot_sum_7} in the form
	\begin{equation}	
		\label{E:induction-step-1}
		f(x) = \sum_{k=0}^{N}A_k e^{x^2/4}D_{\frac{e}{2}-k}(x)+A_{N+1} e^{x^2/4}D_{\frac{e}{2}-N-1}(x).
	\end{equation}
If follows from inductive hypothesis that the first summand in \eqref{E:induction-step-1} is already in the desired form
	\begin{equation}
		\sum_{k=0}^{N}A_k e^{x^2/4}D_{\frac{e}{2}-k}(x)=	r_1(x) e^{x^2/4} D_{\frac{e}{2}-N}(x)
		+ r_2(x) e^{x^2/4} D_{\frac{e}{2}-N}^\prime(x)
	\end{equation}
for some polynomials $r_1(x),\, r_2(x)$. We deduce easily from \eqref{A:bateman_1} that
	\[
	D_{\frac{e}{2}-N}^\prime(x)= \big(\frac{e}{2}-N\big)D_{\frac{e}{2}-N-1}(x)-\frac{x}{2} D_{\frac{e}{2}-N}(x)
		\]
	holds.
It follows from this formula and \eqref{E:derived_parabolic_1}, \eqref{E:derived_parabolic_2} that we can rewrite the above equation in the form
	\begin{equation}\label{E:induction-step-2}
		\begin{split}
		r_1(x)\, e^{x^2/4} & D_{\frac{e}{2}-N}(x)
		 + r_2(x)\, e^{x^2/4} D_{\frac{e}{2}-N}^\prime(x)\\
		&=-r_1(x)\, e^{x^2/4}\Big[\Big(\frac{e}{2}-N-1\Big) D_{\frac{e}{2}-N-2}-x D_{\frac{e}{2}-N-1}\Big]\\
		&\qquad + r_2(x)\, e^{x^2/4}
		\Big[\big(\frac{e}{2}-N\big) D_{\frac{e}{2}-N-1}-\frac{x}{2}\, D_{\frac{e}{2}-N}\Big]\\
		&=-r_1(x)\big(\frac{e}{2}-N-1\big)e^{x^2/4}D_{\frac{e}{2}-N-2}+xr_1(x)e^{x^2/4}D_{\frac{e}{2}-N-1}\\
		&\qquad+ \big(\frac{e}{2}-N\big) r_2(x)e^{x^2/4} D_{\frac{e}{2}-N-1}-\frac{x}{2} r_2(x)e^{x^2/4} D_{\frac{e}{2}-N}\\
		&=- r_1(x)\frac{d}{dx}\big( e^{x^2/4}D_{\frac{e}{2}-N-1}\big) +xr_1(x)e^{x^2/4}D_{\frac{e}{2}-N-1}\\
		&\qquad + \big(\frac{e}{2}-N\big) r_2(x)e^{x^2/4} D_{\frac{e}{2}-N-1}+\frac{x}{2} r_2(x)e^{x^2/2}\frac{d}{dx}\big(e^{-x^2/4}D_{\frac{e}{2}-N-1}\big).
		\end{split}
	\end{equation}
But since both
	\[
		\frac{d}{dx}\big( e^{x^2/4}D_{\frac{e}{2}-N-1}\big)
		\quad
		\textrm{and}
		\quad
		e^{x^2/2}\frac{d}{dx}\big(e^{-x^2/4}D_{\frac{e}{2}-N-1}\big)
		\]
are linear combinations of $e^{x^2/4} D_{\frac{e}{2}-N-1}$ and $e^{x^2/4} D_{\frac{e}{2}-N-1}^\prime$ with polynomial coefficients. Combining this fact, \eqref{E:induction-step-1}, \eqref{E:induction-step-2} that the \eqref{E:gauge_down} holds when either $e=\gamma-\alpha-2=2N$ or $c=\alpha+1=-N$ for some integer $N\ge 1$ holds.
\end{proof}

\section{Schlesinger transformations and apparent singularity:  completion of the proof of Theorem \ref{T:BHC-degenerate-3}}\label{S:completion}
\setcounter{equation}{0}

We recall that the parabolic cylinder equation \eqref{E:parabolic} corresponds to a parabolic type connection \eqref{E:parabolic-connection} that can be written in the form
	\begin{equation}\label{E:parabolic_connection_2}
		\frac{d\Phi}{dx}=\big(A^\prime x+ B^\prime\big) \Phi=\mathcal{P}\Phi,
		\qquad
			A^\prime=\frac12
			\begin{pmatrix}
				1 & 0\\
				0 & -1
			\end{pmatrix},
		\quad
			B^\prime
			=
			\begin{pmatrix}
				0 & r\\
				s & 0
			\end{pmatrix}
	\end{equation}
where $\nu+1=-rs$ (See e.g., \cite[\S1.5]{FIKN})
	\begin{equation}\label{E:parabolic_connection_soln}
		\Phi(x)=
		\begin{pmatrix}
			\Phi_{11} & \Phi_{12}\\
			\Phi_{21} & \Phi_{22}
		\end{pmatrix},
		\qquad
		\Phi_{11}(x)=D_\nu(x),
		\quad
		\Phi_{12}(x)=E_\nu(x)
	\end{equation}
and**\footnote{** We note that we have adopted $E_\nu(z)$ as defined in \eqref{E:parabolic_2nd_soln} as the second linearly independent solution to \eqref{E:parabolic} instead of adopting the $D_{-\nu-1}(ix)$ as the second linearly independent solution.}
	\begin{equation}\label{E:parabolic_connection_soln_2}
		\Phi_{21}(x)=\frac{1}{r}\big(D_\nu^\prime(x)-\frac{x}{2}D_{\nu}(x)\big),
	\quad
		\Phi_{22}(x)=\frac{1}{r}\big(E_\nu^\prime(x)-\frac{x}{2}E_{\nu}(x)\big).
	\end{equation}

The proof of Theorem \ref{T:BHC-degenerate-3} can be completed after we have established the following theorem.

\begin{theorem}\label{E:schlesinger}  Let $\alpha+1\in \mathbb{Z}$ in the biconfluent type connection \eqref{E:BHC}. Then there exists a Schlesinger transformation $\mathcal{S}$ that transforms the parabolic connection \eqref{E:parabolic_connection_2} to the  biconfluent type connection \eqref{E:BHC}.
\end{theorem}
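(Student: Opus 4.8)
The plan is to realize the desired map at the level of fundamental solution matrices and then read off the Schlesinger transformation as the rational factor relating them. Recall that if $\Phi$ solves the parabolic connection \eqref{E:parabolic_connection_2}, $\frac{d\Phi}{dx}=\mathcal{P}\Phi$, and we set $\Psi=\mathcal{S}\Phi$ for a matrix $\mathcal{S}=\mathcal{S}(x)$, then $\Psi$ satisfies $\frac{d\Psi}{dx}=\mathcal{A}\Psi$ with
\[
\mathcal{A}=\mathcal{S}^{\prime}\mathcal{S}^{-1}+\mathcal{S}\,\mathcal{P}\,\mathcal{S}^{-1}.
\]
To qualify as a Schlesinger transformation, $\mathcal{S}$ must be single-valued and rational in $x$ with $\det\mathcal{S}$ a monomial $c\,x^{m}$, so that it preserves the Stokes/monodromy data of \eqref{E:stoke_matrices} while shifting the characteristic exponents at $x=0,\infty$ by integers. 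First I would note that the hypothesis $\alpha+1=c=2\theta_0\in\mathbb{Z}$ is precisely the integrality that permits such an integer shift at the origin: the parabolic connection is regular at $x=0$, whereas the BHC \eqref{E:BHC} carries exponents $\pm\theta_0$ there, and the two can be matched by a rational gauge exactly when their difference lies in $\mathbb{Z}$.

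The construction of $\mathcal{S}$ is furnished by Theorem \ref{T:gauge}. Its content, in the form \eqref{E:gauge_up}--\eqref{E:gauge_up_b}, is that each eigen-solution of \eqref{E:eigen-equation} is simultaneously
\[
f_j=P_{0,j}(x)\,e^{x^2/4}D_{e/2}(x)+P_{1,j}(x)\,e^{x^2/4}\big(D_{e/2}(x)\big)^{\prime},
\qquad
g_j=P_{0,j}(x)\,e^{x^2/4}E_{e/2}(x)+P_{1,j}(x)\,e^{x^2/4}\big(E_{e/2}(x)\big)^{\prime},
\]
with the \emph{same} polynomials $P_{0,j},P_{1,j}$ for the $D$- and the $E$-solution. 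Since the columns of the parabolic fundamental matrix $\Phi$ are built linearly, with rational coefficients, from the pairs $(D_{e/2},D_{e/2}^{\prime})$ and $(E_{e/2},E_{e/2}^{\prime})$ by \eqref{E:parabolic_connection_soln}--\eqref{E:parabolic_connection_soln_2} (taking $\nu=e/2=\theta_\infty-\theta_0-1$), and since the BHC fundamental matrix $\Psi$ is assembled in the same way from the two independent solutions $f_j,g_j$ and their derivatives, this common-polynomial property is exactly what lets one $2\times2$ rational matrix $\mathcal{S}$, whose entries are the $P_{i,j}$ together with the companion polynomials governing $f_j^{\prime},g_j^{\prime}$, carry $\Phi$ to $\Psi$ column by column. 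The scalar factor $e^{x^2/4}$ is not part of $\mathcal{S}$: it is absorbed into the difference of the two normalizations at $\infty$ recorded in \eqref{E:asy-1}, so that $\mathcal{S}$ itself remains rational.

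It then remains to verify the two structural requirements. First, that $\mathcal{A}=\mathcal{S}^{\prime}\mathcal{S}^{-1}+\mathcal{S}\mathcal{P}\mathcal{S}^{-1}$ indeed has the biconfluent shape $Ax+B+C/x$ of Definition \ref{D:BHC-criteria}, with $A=\mathrm{diag}(1,-1)$, with the $\log\frac1x$-exponent at $\infty$ equal to $\pm\theta_\infty$, and with a simple pole at the origin whose residue $C$ is traceless with eigenvalues $\pm\theta_0$; and second, that $\det\mathcal{S}=c\,x^{m}$, so that $\mathcal{S}^{-1}$ has no poles away from $0,\infty$ and no spurious singularity is created. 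The factor $x^{-\alpha}$ appearing in cases (2) and (4) of Theorem \ref{T:gauge} --- supplied by the symmetries $\phi_2,\phi_7$ of Proposition \ref{T:BHE-symmetry} --- is rational precisely because $\alpha\in\mathbb{Z}$, and it is what installs the correct exponent at $x=0$.

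I expect the determinant condition to be the main obstacle: one must show that the polynomial matrix built from $(P_{0,j},P_{1,j})$ and their derivative-companions is invertible with determinant a pure power of $x$, equivalently that the gauge in Theorem \ref{T:gauge} creates only a simple pole at the origin with trivial (apparent) local monodromy and nothing else. This is exactly the apparent-singularity content of Theorem \ref{T:gauge}, and it is here that the hypothesis $\alpha+1\in\mathbb{Z}$ is indispensable. Finally, the sign-combinations covered by $\alpha+1\in\mathbb{Z}$ (both signs of $\alpha+1$, and the reflections $\gamma\mapsto-\gamma$) are reduced to the base case by applying the corresponding element of the symmetry group $C_2\times C_4$ from Theorem \ref{T:BHC_sym}; this completes the proof of Theorem \ref{E:schlesinger} and, together with parts (ii)--(iii) already established, finishes Theorem \ref{T:BHC-degenerate-3}(i).
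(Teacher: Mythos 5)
Your construction is, in essence, the paper's own: the paper likewise builds the fundamental matrix $\Psi$ of \eqref{E:BHC} from the pair $f_j,\,g_j$ of Theorem \ref{T:gauge} via Proposition \ref{P:system_scalar}, uses the common-polynomial property (together with its analogue for the derivatives $f_j',\,g_j'$) to write $\Psi=Q\widetilde{\Phi}=QC^{-1}\Phi=:R\,\Phi$, reads off $\mathcal{A}=(R\mathcal{P}+R_x)R^{-1}$ --- which is exactly your $\mathcal{A}=\mathcal{S}'\mathcal{S}^{-1}+\mathcal{S}\mathcal{P}\mathcal{S}^{-1}$ --- and then disposes of $2\theta_0\ge 1$ by the symmetry $\theta_0\mapsto-\theta_0$ of Theorem \ref{T:BHC_sym}, treating $2\theta_0=0$ as a separate (trivial) case in which $C$ vanishes and \eqref{E:BHC} is already parabolic; your symmetry reduction skips that last case, but it is harmless.

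Where you diverge from the paper is in the two ``structural requirements'' you impose, and one of them you resolve incorrectly. You assert that $\mathcal{S}$ is rational, the factor $e^{x^2/4}$ being ``absorbed into the difference of the two normalizations at $\infty$''. It cannot be: the leading coefficient at $\infty$ of \eqref{E:parabolic_connection_2} is $\frac12\,\mathrm{diag}(1,-1)$ while that of \eqref{E:BHC} is $\mathrm{diag}(1,-1)$, so the solutions of the two systems grow like $e^{\pm x^2/4}$ and $e^{\pm x^2/2}$ respectively (compare \eqref{E:asy-1}), and no rational gauge can intertwine connections with distinct irregular types at $\infty$. Concretely, in the paper's construction the rows of $Q$ are $e^{x^2/4}$ times polynomial combinations of the $p_{i,\,j}$ and $q_{i,\,j}$, so $R=QC^{-1}$ genuinely carries the exponential scalar; it is rational only up to that factor. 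Relatedly, the paper never verifies --- and never needs --- your conditions ``$\mathcal{S}$ rational, $\det\mathcal{S}=c\,x^m$'': it explicitly warns that its transformations, unlike classical Schlesinger transformations, connect connections with \emph{different} monodromy/Stokes data (one having fewer singularities than the other), and it takes the exhibited $R$ together with the gauge formula as the conclusion of the proof. So the ``determinant obstacle'' that you flag as the main remaining difficulty is not overcome anywhere in the paper; it is bypassed by adopting this weaker notion of Schlesinger transformation. If you insist on the classical definition you set up at the start, your outline fails at the rationality claim, and the statement being proved would in fact be stronger than (and different from) the one the paper establishes.
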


\begin{proof} Let
	\begin{equation}
			\widetilde{\Phi}(x)=
				\begin{pmatrix}
					D_\nu(x) & E_\nu(x)\\
					D_{\nu}^\prime(x) & E_\nu^\prime(x).
				\end{pmatrix}
	\end{equation}
Then one can write
	\[
		\Phi(x)=C\widetilde{\Phi}(x)=
		\begin{pmatrix}
			1 & 0 \\
			\displaystyle\frac1r & \displaystyle-\frac{x}{2r}
		\end{pmatrix}
		\begin{pmatrix}
			D_\nu &  E_\nu(x)\\
			D_{\nu}^\prime(x) & E_\nu^\prime(x).
		\end{pmatrix}
	\]

The remaining step of the proof is to show the existence of a Schlesinger transformation from a parabolic type connection to a biconfluent type connection. The difference between our argument and that of the general theory of Schlesinger transformations is that Schlesinger transformations generally transform solutions amongst connections of the same monodromy/Stokes' data, while our Schlesinger transformations to be constructed transform between connections of different monodromy/Stokes' data in the sense that one connection has fewer singularities than the other.

We first assume that $2\theta_0=\alpha+1=-N\le -1$. That is, $\alpha\in\big\{-2,\, -3,\, -4,\cdots
\big\}$.
Without loss of generality, we may assume $z=0$, $y=0$ and $z/y=\lambda$ in \eqref{E:BHC}. That is, we consider the member of isomonodromy family of the BHC \eqref{E:BHC} at the exceptional point $z=0$, $y=0$,  $z/y=\lambda$. Since all the isomonodromy integral curves in the $yz-$plane pass through the exceptional line of the blow up $(z,\, z/y)\mapsto (z,\, y)$ (see \cite{poberezhny}), no generality is lost. Thus one derives the \eqref{E:BHE} as stated. Since $\alpha+1=-N$ and $N\ge 1$ here, so Theorem \eqref{T:gauge} case 1 indicates that we can find two linearly independent solutions \eqref{E:gauge_up} and \eqref{E:gauge_up_b} to the BHE \eqref{E:BHE}.

It follows from Theorem \ref{T:gauge} (1) that for a suitable $j$ that both $f$ and $g$ are given by \eqref{E:gauge_down} and \eqref{E:gauge_down_b}. Moreover, we deduce that
\begin{equation}\label{E:gauge_down_II}
		f_j^\prime(x)=q_{0,\, j}(x)\, e^{x^2/4} D_{\frac{e}{2}-N}(x)+ q_{1,\, j}(x)\, e^{x^2/4}\big(D_{\frac{e}{2}-N}(x)\big)^\prime,
		\quad 0\le j\le N
	\end{equation}
and
	\begin{equation}\label{E:gauge_down__IIb}
		g_j^\prime(x)=q_{0,\, j}(x)\, e^{x^2/4} E_{\frac{e}{2}-N}(x)+ q_{1,\, j}(x)\, e^{x^2/4}\big(E_{\frac{e}{2}-N}(x)\big)^\prime,
		\quad 0\le j\le N
	\end{equation}
for the same polynomials $q_{0,\, j}(x)$ and $q_{1,\, j}(x)$. But we know that
		\[
			\Psi(x)=
			\begin{pmatrix}
				f & g \\
				\displaystyle\frac{1}{a_{12}}(f^\prime-a_{11}f) & \displaystyle\frac{1}{a_{22}} (g^\prime-a_{22}g)
			\end{pmatrix}
		\]
satisfies $\Psi^\prime(x)=\mathcal{A} \Psi(x)$. So we can find a matrix $Q(x)$ such that
		\[
			\Psi (x)=Q(x)\, \widetilde{\Phi}(x).
		\]
Hence
		\[
 			\Psi (x)=Q(x) \widetilde{\Phi}(x)= Q(x) (C^{-1}(x)\Phi(x))=
			(Q(x)C^{-1}(x))\, \Phi(x)
			=R(x)\, \Phi(x),
		\]
where $R(x)=Q(x)C^{-1}(x)$, and the $\Psi(x)$ satisfies the \eqref{E:BHC}. Substituting this $\Psi(x)=R(x)\, \Phi(x)$ and remembering that $\Phi(x)$ satisfies the parabolic connection $\Phi^\prime(x)=\mathcal{P}\Phi(x)$, yields the equation
	\[
		\Psi^\prime=\big(R\mathcal{P}R^{-1}+R_x\, R^{-1}\big) \Psi.
	\]
Hence
	\[
		\mathcal{A}=\big(R\mathcal{P}+R_x\, \big)R^{-1}
	\]
which is nothing but the standard formula that appears in Schlesinger transformation between the \eqref{E:parabolic-connection} and \eqref{E:BHC}. See e.g., \cite[(2.19)]{mugan-fokas}. This completes the proof when $2\theta_0=1+\alpha=\big\{-1,\, -2,\, -3,\cdots\big\}$.

We now suppose that $2\theta_0=\big\{1,\, 2,\, 3,\cdots\big\}$. But since Theorem \ref{T:BHC_sym} shows that the BHC \eqref{E:BHC} is invariant under the transformation $\theta_0\mapsto -\theta_0$ \eqref{E:theta_0}, so we could relegate this case to the above consideration. It remains to consider that case when $2\theta_0=0$. But this means that the matrix $C$ in \eqref{E:BHC-matrices} becomes trivial so that the BHC \eqref{E:BHC} immediately reduces to a parabolic type connection \eqref{E:parabolic-connection}. This completes the proof.

\end{proof}


\section{Series solutions of parabolic cylinder functions}\label{S:Sums}
 \setcounter{equation}{0}

  On the other hand,
 The {BHE} (\ref{E:BHE}) can be rewritten as
 $$
 y''+(-2z+b+\frac{c}{z})y'+(\frac{d}{z}+e)y=0.
 $$
 Let $\d P(z)=-2z+b+\frac{c}{z}$. Through the standard substitution,
 $$
 y=Y \, \exp({-\frac{1}{2}\int P})=Y(z)\,
 z^{\frac{-c}{2}}\, \exp({\frac{1}{2}z^2-\frac{1}{2}b z}),
 $$
 the above equation becomes
 \begin{eqnarray*}
 0 &=& Y''-\left(
 z^2-b z+(\frac{b^2}{4}-1-c-e)+(\frac{b c}{2}-d)\frac{1}{z}+(\frac{c^2}{4}-\frac{c}{2})\frac{1}{z^2}\right)Y\\
  &=& Y''-\left(
  \frac{1}{2}(\frac{2z-b}{\sqrt{2}})^2-(1+c+e)+O(\frac{1}{z})\right)Y.
  \end{eqnarray*}
 This suggests that the equation is asymptotic to
 the parabolic cylinder equation \eqref{E:parabolic}
 \begin{equation*}
  D_\nu''(x)=(\frac{1}{4}x^2-\nu-\frac{1}{2})D_\nu(x).
  \end{equation*}
We note that two identities \eqref{E:derived_parabolic_1} and \eqref{E:derived_parabolic_2} for which any solution $D_\nu(x)$ of \eqref{E:parabolic} satisfy are refereed to in Appendix B.

 Let $y(z)=f(x)$, where $\d x=\frac{b-2z}{\sqrt{2}}$
 or equivalently $\d z=\frac{b-\sqrt{2}x}{2}$.
 Then (\ref{E:BHE}) becomes
 $$
 ({b-\sqrt{2}x})f''+(\sqrt{2}x^2-b
 x-\sqrt{2}c)f'+(\frac{-e}{\sqrt{2}}x+d+\frac{b e}{2})f=0.
 $$

We extend Hautot's work \cite{hautot_1969, hautot_1971} by showing the formal expansion solution
\begin{equation}\label{E:infinite_sum_2}
		y(x)=e^{x^2/4}\sum_{k=0}^\infty A_k D_{\frac{e}{2}-k}(x)
		=e^{x^2/4}\sum_{k=0}^\infty A_k D_{(\theta_\infty-\theta_0-1)-k}(x)
,\quad x=(b-2z)/\sqrt{2}
	\end{equation}
of \eqref{E:BHE} mentioned in \eqref{E:infinite_sum} actually converges in any compact set in a half-plane uniformly. We achieve this by applying a recent result of Wong and Li \cite{wong} on linear second order difference equations.

\begin{theorem} \label{T:Wong-Li}
Consider the second order linear difference equation
 \begin{equation}
 y(n+2)+n^p a(n)y(n+1)+n^q b(n)y(n)=0,\label{eq2.5}
 \end{equation}
 where $p$ and $q$ are integers, and $a(n)$ and $b(n)$ have power series
 expansions of the form
	\begin{equation}
 a(n)=\sum_{s=0}^\infty \frac{a_s}{n^s}\qquad  \textmd{and} \qquad
 b(n)=\sum_{s=0}^\infty \frac{b_s}{n^s},
	\end{equation}
 for large values of $n$, $a_0\neq 0$, $b_0\neq 0$. When $2p-q=-1,$ two
 formal series solutions of (\ref{eq2.5}) are of the form
	\begin{equation}\label{E:Wong-Li-asymptotic}
 y(n)=[(n-2)!]^{q/2}\rho^n e^{\gamma \sqrt{n}} n^\alpha
 \sum_{s=0}^{\infty} \frac{c_s}{n^{s/2}},
	\end{equation}
 where
	\begin{equation}
 \rho^2=-b_0,\quad \gamma=-\frac{a_0}{\rho}\quad \textmd{and}\quad
 \alpha=\frac{b_1}{2b_0}+\frac{q}{4}.
	\end{equation}
 \end{theorem}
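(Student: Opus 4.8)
The plan is to establish \eqref{E:Wong-Li-asymptotic} by the Birkhoff--Trjitzinsky / WKB method for second order linear difference equations: read off the leading growth from a frozen-coefficient characteristic relation, strip it away by a normalising substitution, and then pin down every remaining factor by matching powers of $n^{-1/2}$. First I would freeze $a(n),b(n)$ at their leading values and examine the indicial relation $\lambda^2+n^p a_0\lambda+n^q b_0=0$ for the ratio $\lambda(n)=y(n+1)/y(n)$, whose roots are $\lambda_\pm\approx-\tfrac12 n^p a_0\pm\sqrt{\tfrac14 n^{2p}a_0^2-n^q b_0}$. The hypothesis $2p-q=-1$ is exactly what renders $n^{2p}a_0^2$ subordinate to $n^q b_0$ inside the radical, giving $\lambda_\pm\approx\pm\rho\,n^{q/2}$ with $\rho^2=-b_0$; the two square roots of $-b_0$ are the source of the two claimed formal solutions, and telescoping $\prod_k\lambda(k)$ produces the prefactor $\rho^n[(n-2)!]^{q/2}$ of \eqref{E:Wong-Li-asymptotic}, the precise factorial shift being a normalisation convention fixed below.

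Next I would carry out the substitution $y(n)=[(n-2)!]^{q/2}\rho^n z(n)$. Dividing \eqref{eq2.5} by the prefactor $[n!]^{q/2}\rho^{n+2}$ of $y(n+2)$ and using the exact ratios $[(n-1)!]^{q/2}/[n!]^{q/2}=n^{-q/2}$ and $[(n-2)!]^{q/2}/[n!]^{q/2}=[n(n-1)]^{-q/2}$, the equation becomes $z(n+2)+\tilde a(n)z(n+1)+\tilde b(n)z(n)=0$ with $\tilde a(n)=\rho^{-1}a(n)\,n^{\,p-q/2}=\rho^{-1}a(n)\,n^{-1/2}$ and $\tilde b(n)=\rho^{-2}b(n)\,(n/(n-1))^{q/2}\to-1$. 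Because $q=2p+1$ is odd, this rescaling introduces genuine half-integer powers of $n$, which is the structural origin both of the series $\sum_s c_s n^{-s/2}$ and of the $e^{\gamma\sqrt n}$ factor; the leading characteristic roots of the normalised equation are $\pm1$, and I would track the slowly varying $+1$ branch (the $-1$ branch merely reproduces the second square root of $-b_0$).

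The heart of the argument is to insert $z(n)=e^{\gamma\sqrt n}n^\alpha\sum_{s\ge0}c_s n^{-s/2}$ into $z(n+2)/z(n)+\tilde a(n)z(n+1)/z(n)+\tilde b(n)=0$ and match order by order, expanding $\sqrt{n+j}=\sqrt n+\tfrac{j}{2\sqrt n}-\cdots$ and $(1+j/n)^\alpha=1+\alpha j/n+\cdots$. At order $n^{-1/2}$ the only contributions are the $\gamma/\sqrt n$ coming from $e^{\gamma\sqrt n}$ and the $\rho^{-1}a_0\,n^{-1/2}$ coming from $\tilde a$ (note $\tilde b$ carries only integer powers of $1/n$), and their cancellation forces $\gamma=-a_0/\rho$. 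The next, $n^{-1}$, balance gathers the first correction $b_1$ of $b(n)$ together with the Stirling contribution $q/4$ from the factorial ratio and fixes $\alpha=\tfrac{b_1}{2b_0}+\tfrac q4$. With $\gamma$ and $\alpha$ determined, the higher orders yield a recursion for the $c_s$ that is triangular in $s$, leaving $c_0$ free and determining $c_1,c_2,\dots$ uniquely; I would verify the recursion is non-degenerate so that the formal series closes.

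The step I expect to be the real obstacle is the bookkeeping that keeps the three expansions --- the factorial/Stirling ratio, the exponential $e^{\gamma\sqrt n}$, and the algebraic series in $n^{-1/2}$ --- mutually consistent to all orders, and in particular verifying that the subdominant middle term $n^p a(n)y(n+1)$ feeds into the half-integer hierarchy without corrupting the integer-power structure of $\tilde b(n)$. At the purely formal level asserted in the statement this is all that is required; if one additionally wants genuine solutions that are asymptotic to \eqref{E:Wong-Li-asymptotic}, the substantive analytic work (Borel summability and the associated connection analysis) is precisely what Wong and Li supply in \cite{wong}, and I would invoke their result rather than reprove it.
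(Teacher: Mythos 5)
Your proposal is formally correct, but note that the paper itself contains no proof of this statement at all: Theorem \ref{T:Wong-Li} is imported verbatim from Wong and Li \cite{wong} and used as a black box to obtain the asymptotics of the coefficients $A_k$ in Theorem \ref{T:half-expansion}. So there is no internal argument to compare against; what you have done is reconstruct the formal Birkhoff--Trjitzinsky derivation that underlies Wong--Li's result. Your mechanics check out. The normalisation $y(n)=[(n-2)!]^{q/2}\rho^n z(n)$ does give $\tilde a(n)=\rho^{-1}a(n)n^{-1/2}$ and $\tilde b(n)=\rho^{-2}b(n)(n/(n-1))^{q/2}$, the order-$n^{-1/2}$ balance gives $\gamma+\rho^{-1}a_0=0$, and at order $n^{-1}$ the cross term $\rho^{-1}a_0\gamma/2=-\gamma^2/2$ cancels the $\gamma^2/2$ from $z(n+2)/z(n)$, leaving $2\alpha=b_1/b_0+q/2$, i.e.\ $\alpha=\tfrac{b_1}{2b_0}+\tfrac{q}{4}$, exactly as claimed. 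The non-degeneracy you flag but do not verify is in fact easy to confirm: writing $S(n)=P(n)+Q(n)+R(n)=O(n^{-3/2})$ after $\gamma,\alpha$ are fixed, the coefficient of $c_s$ at order $n^{-(s+2)/2}$ is $-s\neq 0$ (coming from $(n+2)^{-s/2}-n^{-s/2}\sim -s\,n^{-(s+2)/2}$ against the leading $P\sim 1$), so the recursion determines each $c_s$ uniquely with $c_0$ free and no logarithmic terms arise. Your closing caveat is also the right one: the theorem as used in the paper asserts genuine asymptotic validity, not merely formal solvability, and that analytic content (existence of true solutions asymptotic to these formal series) is precisely what must be cited from Wong--Li rather than rederived by the matching argument.
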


The estimate \eqref{E:Wong-Li-asymptotic} gives an accurate asymptotic behaviour of the coefficients $A_k$ of \eqref{E:infinite_sum_2}, which satisfies the three-term recursion \eqref{E:3-recursion}.

\begin{theorem}\label{T:half-expansion} Suppose $\Re b=Re(-\beta)<0$. Then the infinite parabolic sum  solution \eqref{E:infinite_sum_2} of the biconfluent Heun equation\eqref{E:BHE} converges absolutely in any compact subset in $\Re z<\Re b < 0$.
\end{theorem}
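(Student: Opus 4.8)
The plan is to reduce the convergence question to a competition between the factorial growth of the coefficients $A_k$ and the factorial decay of the parabolic cylinder functions $D_{e/2-k}(x)$, and to show that these factorial factors cancel exactly, leaving a residual factor of the form $\exp\big(\sqrt2\,(\Re z-\Re b)\sqrt k\,\big)$ whose exponent is negative precisely on the stated half-plane. First I would pin down the large-$k$ behaviour of the coefficients. By Theorem \ref{T:formal-series} the $A_k$ obey the three-term recursion \eqref{E:3-recursion}--\eqref{E:3-term}; shifting the index and dividing by $\alpha_{n+1}=-\sqrt2(n+2)$ casts it into the Wong--Li normal form \eqref{eq2.5}, with the coefficient of $A_{n+1}$ tending to $-b/\sqrt2$ and that of $A_n$ behaving like $-n$. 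Thus $p=0,\ q=1,\ a_0=-b/\sqrt2,\ b_0=-1$, and indeed $2p-q=-1$, so Theorem \ref{T:Wong-Li} supplies two independent solutions
\[
A_n\sim[(n-2)!]^{1/2}\,\rho^{\,n}\,e^{\gamma\sqrt n}\,n^{\alpha},\qquad \rho=\pm1,\quad \gamma=-a_0/\rho=\pm\, b/\sqrt2 .
\]
Since $\Re b<0$, the branch $\rho=-1$, for which $\Re\gamma=-\Re b/\sqrt2>0$, dominates; a generic choice of the initial data $A_0,A_1$ of \eqref{E:3-recursion} therefore forces $|A_n|$ to this dominant order, which is the growth I shall use (if instead the recessive branch were selected, convergence would only improve).

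Next I would determine the large-$k$ behaviour of $D_{e/2-k}(x)$ for $x$ in a compact set. Because the order $\nu=e/2-k\to-\infty$, one may apply Laplace's method to the integral representation $D_\nu(x)=\frac{e^{-x^2/4}}{\Gamma(-\nu)}\int_0^\infty t^{-\nu-1}e^{-t^2/2-xt}\,dt$; the saddle sits at $t_*\sim\sqrt k$ and yields
\[
D_{e/2-k}(x)\sim \frac{C}{[(k-2)!]^{1/2}}\,e^{-x\sqrt k}\,k^{\kappa},
\]
uniformly for $x$ in compacta, the recessive branch being fixed by the known special value $D_\nu(0)=2^{\nu/2}\sqrt\pi/\Gamma(\tfrac{1-\nu}{2})$. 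Equivalently, since \eqref{E:derived_parabolic_2} shows that $D_\nu$ (and $E_\nu$) solve a three-term recursion in the order $\nu$, one may again invoke Theorem \ref{T:Wong-Li} (here $p=-1,\ q=-1,\ a_0=x,\ b_0=-1$, so $2p-q=-1$) to obtain the same factorial factor $[(k-2)!]^{-1/2}$ and exponential $e^{-x\sqrt k}$.

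Finally I would combine the two estimates. The factorial factors $[(n-2)!]^{1/2}$ and $[(n-2)!]^{-1/2}$ cancel, so that with $x=(b-2z)/\sqrt2$,
\[
\big|A_n\,D_{e/2-n}(x)\big|\le C\,n^{M}\exp\!\big[(\Re\gamma-\Re x)\sqrt n\,\big],\qquad \Re\gamma-\Re x=\sqrt2\,(\Re z-\Re b).
\]
On a compact subset $K$ of $\{\Re z<\Re b<0\}$ there is $\eta>0$ with $\Re z-\Re b\le-\eta$ throughout $K$, whence each term is majorised by the $z$-independent quantity $C\,n^{M}e^{-\sqrt2\,\eta\sqrt n}$, which is summable. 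The Weierstrass $M$-test, together with the uniform boundedness of the prefactor $e^{x^2/4}$ on $K$, then gives the absolute and uniform convergence of \eqref{E:infinite_sum_2} on $K$, as claimed.

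The main obstacle will be the second step together with the branch bookkeeping. One must produce the large-order asymptotics of $D_{e/2-k}(x)$ with uniform-in-$x$ error control and, crucially, correctly identify which of the two $\exp(\mathrm{const}\cdot\sqrt k)$ branches each sequence realises --- the dominant branch for $A_k$ and the recessive branch for $D_{e/2-k}(x)$. A sign error in either identification would change the exponent $\Re\gamma-\Re x$ and hence shift the half-plane of convergence, so reconciling the two exponential branches so that their residual real part is exactly $\sqrt2\,(\Re z-\Re b)$ is the delicate heart of the argument.
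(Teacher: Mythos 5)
Your proposal is correct and follows essentially the same route as the paper: Wong--Li asymptotics (Theorem \ref{T:Wong-Li}) applied to the three-term recursion for the $A_n$, large-order asymptotics of $D_{e/2-n}(x)$, cancellation of the factorial factors leaving an exponential $e^{(\pm b/\sqrt2-x)\sqrt n}$ whose real part is negative exactly on $\Re z<\Re b<0$, and a Weierstrass-type comparison to conclude absolute convergence. The only differences are cosmetic: the paper cites the known Erd\'elyi formula \eqref{E:parabolic-asymptote} for the large-order behaviour of $D_\nu$ instead of re-deriving it by Laplace's method, and it covers both exponential branches of $A_n$ by imposing the sign condition for both $\pm b/\sqrt2$ rather than isolating the dominant branch as you do.
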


\begin{proof}
Now from Theorem \ref{T:formal-series} that the recurrence relation can be written
 as
 $$
 A_{n+2}-\frac{d+b(n+1)}{\sqrt{2}(n+2)}A_{n+1}+\frac{(\frac{e}{2}-n)(n+c)}{n+2}A_n=0.
 $$
 Here
 $$
 -\frac{d+b(n+1)}{\sqrt{2}(n+2)} =
 \frac{-b}{\sqrt{2}}\left(1+\frac{d-b}{bn}+O(\frac{1}{n^2})\right),
 $$
 and
 $$
 \frac{(\frac{e}{2}-n)(n+c)}{n+2}
 =n\left(-1+\frac{2+\frac{e}{2}-c}{n}+O(\frac{1}{n^2})\right).
 $$
 So $p=0$, $q=1$, $\d a_0=\frac{-b}{\sqrt{2}}$, $b_0=-1$, and
 $\d b_1=2+\frac{e}{2}-c$ in \eqref{E:3-term}. Thus
 $$
 \rho=\pm 1,\quad \ga=\mp \frac{b}{\sqrt{2}}\quad
 \al=\frac{c}{2}-\frac{e+3}{4}.
 $$
 Hence, by Theorem \ref{T:Wong-Li}, as $n\to\infty$,
 $$
 A_n\sim [(n-2)!]^{1/2}(\pm 1)^n \rme^{\mp b \sqrt{n/2}}\,
 n^{\frac{c}{2}-\frac{e}{4}-\frac{3}{4}}.
 $$
 By Stirling's formula, we have
	\begin{equation}\label{E:coeff-asymptote}
 \log A_n \sim \frac{2n-3}{4}\log (n-2)-\frac{n}{2}\pm
 b\sqrt{\frac{n}{2}}+(\frac{c}{2}-\frac{e}{4}-\frac{3}{4})\log n+C.
	\end{equation}

 We deduce from \eqref{E:parabolic-asymptote} that, as $n\to\infty$,
 $$
 D_{\frac{e}{2}-n}(x)\sim \frac{1}{\sqrt{2}} \exp\left(
 (\frac{e}{4}-\frac{n}{2})\log(n-\frac{e}{2})+(\frac{n}{2}-\frac{e}{4})-x(n-\frac{e}{2})^{1/2}\right).
 $$
 Hence
 \begin{equation}
 \log D_{\frac{e}{2}-n}(x)\sim \frac{e}{4}\log n-\frac{n}{2}\log n
 +\frac{n}{2}-x\sqrt{n}+C ,\label{eq2.7}
 \end{equation}
 where $C$ is a constant.

 Combining it with \eqref{E:coeff-asymptote}, we have a simple asymptotic
 expression.
 \begin{equation}
 \log A_n D_{\frac{e}{2}-n}(x) \sim (\pm
 \frac{b}{\sqrt{2}}-x)\sqrt{n}+(\frac{c}{2}-\frac{3}{2})\log n +C,
 \label{eq2.8}
 \end{equation}
 where $x=\Re z$.  Therefore the series \eqref{E:infinite_sum_2} converges absolutely if
 $$
 \Re (\frac{b}{\sqrt{2}}-x),\ \Re(\frac{-b}{\sqrt{2}}-x)<0,
 $$
 or equivalently,
 $$
 \Re z,\ \Re (z-2b) < 0.
 $$
 For in this case, there exists some $\ep>0$ such that
 $$
 \log A_n D_{\frac{e}{2}-n}(x) \sim -\ep\sqrt{n}.
 $$
 So
 $$
 |A_n D_{\frac{e}{2}-n}(x)| \sim \rme^{-\ep\sqrt{n}}<\frac{1}{n^2}
 $$
 where $n$ is large enough. Hence the series is absolutely
 convergent.
\end{proof}

We next show that it is possible to construct an entire function solution to \eqref{E:BHE}
from the \eqref{E:infinite_sum_2}. We resort to consider solutions $\phi_5$ and $\phi_4$ from the symmetry group as described in Proposition \ref{T:BHE-symmetry}.

\subsection{Asymptotic behaviour of $A_kD_{\frac{e}{2}-k}$ from $\phi_5(z)$}

We recall the relationship between the two sets of parameters $(\alpha,\, \beta,\, \gamma,\, \delta)$ and $(a,\, b,\, c,\, d)$ of the equations \eqref{E:BHEC} and \eqref{E:BHE} respectively, are given by
 $$
 (\al,\be,\ga,\de)=(c-1,-b,e+c+1,bc-2d),
 $$
 {so that}
 $$
 {(b,c,d,e)=(-\be,\al+1,-\frac{1}{2}(\de+(1+\al)\be),\ga-\al-2).}
 $$
 We want to study the transformation $\phi_5$ in Proposition \ref{T:BHE-symmetry} which maps $(b,d,e)$ to
 $$
 (\widetilde{b},\widetilde{d},\widetilde{e})= (i b, i(b c-d),-e-2c-2).
 $$
 Thus the formal series in \eqref{E:BHE} becomes
 \begin{eqnarray*}
 \phi_4(z) &=& \exp(\be z+z^2) \exp \frac{(2i z-i b)^2}{8}
 \sum_{n=0}^\infty \widetilde{A_n} D_{-\frac{e}{2}-c-1-n} (\frac{i b-2 i
 z}{\sqrt{2}})\\
  &=& \exp (\frac{1}{2}(z-\frac{b}{2})^2) \sum_{n=0}^\infty \widetilde{A_n} D_{-\frac{e}{2}-c-1-n}
   (\frac{i (b-2 z)}{\sqrt{2}}),
  \end{eqnarray*}
  while the recurrence relation becomes
  $$
  -\sqrt{2}(n+2) \widetilde{A}_{n+2}+i(b(n+1)+b
  c-d)\widetilde{A}_{n+1}+\sqrt{2}(n+\frac{e}{2}+c+1)(n+c)\widetilde{A}_n=0,
  $$
  or
  \begin{equation}
  \widetilde{A}_{n+2}-\frac{i(b n+b+b
  c-d)}{\sqrt{2}(n+2)}\widetilde{A}_{n+1}+\frac{(n+\frac{e}{2}+c+1)(n+c)}{n+2}
  \widetilde{A}_n=0. \label{eq2.11}
  \end{equation}
  We also let $\widetilde{A}_0=1$, so $\d
  \widetilde{A}_1=\frac{i}{\sqrt{2}(b c-d)}$.
  Hence $\d a_0=\frac{-i b}{\sqrt{2}}$, $b_0=-1$, $\d
  b_1=1-2c-\frac{e}{2}$, which implies
  $$
  \rho_0=\pm 1,\quad \ga_0=\pm\frac{i b}{\sqrt{2}},\quad \al_0=
  c+\frac{e}{4}-\frac{1}{4}.
  $$
  So by Theorem \ref{T:Wong-Li}, we have
  $$
  \widetilde{A}_n\sim [(n-2)!]^{1/2}(\pm 1)^n \rme^{\pm i
  b \sqrt{\frac{n}{2}}} n^{c+\frac{e}{4}-\frac{1}{4}}.
  $$
  That means,
  $$
  \log \widetilde{A}_n \sim \frac{2n-3}{4}\log(n-2)-\frac{n}{2}\pm i b
  \sqrt{\frac{n}{2}}+(c+\frac{e}{4}-\frac{1}{4})\log n + C.
  $$
  By the same transformation, (\ref{eq2.7}) becomes
  \begin{eqnarray*}
  \log D_{-\frac{e}{2}-c-1-n}(\frac{i(b-2z)}{\sqrt{2}} &\sim&
  (\frac{-e}{4}-\frac{c+1+n}{2})\log (n+1+c+\frac{e}{2})+
  \frac{n+c+1+\frac{e}{2}}{2}\\
   &&\qquad -\sqrt{n+c+1+\frac{e}{2}}\,
  \frac{i(b-2z)}{\sqrt{2}}+C.
  \end{eqnarray*}
  Combining the asymptotics,
  \begin{equation}
   \widetilde{A}_n
  D_{-\frac{e}{2}-c-1-n}(\frac{i(b-2z)}{\sqrt{2}})
  \sim \frac{i}{\sqrt{2}}(\pm
  b-b+2z)\sqrt{n}+(\frac{c}{2}-\frac{3}{4})\log n +C.\label{eq2.9}
  \end{equation}
  The series converges absolutely if
 $\Im (\pm b-b+2z)>0$, or $\Im (\pm b-b+2z)=0$ but $\Re
 (\frac{c}{2}-\frac{3}{4})<-1$. This is equivalent to say that
 $$
 \Im z>0\ \mbox{and}\ \Im z>\frac{1}{2}\Im b,\quad \mbox{or}\quad \mbox{one equality holds while } \Re
 c<\frac{-1}{2}
 $$
 while one of them vanishes.

\subsection{Asymptotic behaviour of $A_kD_{\frac{e}{2}-k}$ from $\phi_4(z)$}

On the other hand, we consider the transformation $\phi_4(z)$ from Proposition \ref{T:BHE-symmetry} which maps the parameters $(b,d,e)$ to $(-i b, i(b c-d),-e-2c-2)$. Hence
 the series expansion becomes
 $$
 \phi_3(z)= \exp(\frac{1}{2}(z-\frac{b}{2})^2)\sum_{n=0}^\infty
 \widetilde{B}_n D_{-\frac{e}{2}-c-1-n}(\frac{i}{\sqrt{2}}(2z-b)),
 $$
 where
 \begin{equation}
 \widetilde{B}_{n+2}+\frac{i(b n+b+b c-d)}{\sqrt{2}(n+2)}
 \widetilde{B}_{n+1}-\frac{(n+c)(n+\frac{e}{2}+c+1)}{n+2}\widetilde{B}_n=0.
 \label{eq2.12}
 \end{equation}
 Letting $\widetilde{B}_0=1$, we also have $\d
  \widetilde{B}_1=\frac{i}{\sqrt{2}(b c-d)}$.
 So $\d a_0=\frac{ib}{\sqrt{2}}$, $b_0=-1$, $b_1=1-2c-\frac{e}{2}$,
 which implies that
 $$
 \rho=\pm 1,\quad \ga_0=\mp \frac{ib}{\sqrt{2}},\quad
 \al_0=c+\frac{e}{4}-\frac{1}{4}.
 $$
 Similar as above, we obtain
 $$
 \log \widetilde{B}_n \sim \frac{2n-3}{4}\log(n-2)-\frac{n}{2}\mp
 ib\sqrt{\frac{n}{2}}+(c+\frac{e-1}{4})\log n +C.
 $$
 Also,
 $$
 \log D_{-\frac{e}{2}-c-1-n}(\frac{i(2z-b)}{\sqrt{2}} \sim
  (\frac{-e}{4}-\frac{c+1+n}{2})\log n+
  \frac{n}{2}-
  {i(2z-b)}{\sqrt{\frac{n}{2}}}+C.
  $$
  Therefore, we have
  $$
   \widetilde{B}_n
  D_{-\frac{e}{2}-c-1-n}(\frac{i(2z-b)}{\sqrt{2}})
  \sim \frac{-i}{\sqrt{2}}(\pm
  b-b+2z)\sqrt{n}+(\frac{c}{2}-\frac{3}{4})\log n +C.
  $$
  So the series converge absolutely if
  $$
  \Im (2z-b\pm b)<0,\quad \mbox{or}\quad \Im(2z-b\pm b)=0\mbox{ but } \Re
  c<-\frac{1}{2}.
  $$
  The above first condition is equivalent to $\Im z<0$ and $\Im
  z<\Im b$.

We are ready to state the next main result.

\begin{theorem}\label{T:full-expansion}
If $b\in \bfR$ and $\d \Re c<-\frac{1}{2}$, then there exists an
  entire solution $y$ of (\ref{E:BHE}) expressible in terms of an absolutely convergent
  series of parabolic cylinder functions:
  $$
  y(z)=\left\{
  \begin{array}{ll}
  C_0 \exp(\frac{1}{2}(z-\frac{b}{2})^2)\sum \widetilde{A}_n  D_{-\frac{e}{2}-c-1-n}\displaystyle(\frac{i(b-2z)}{\sqrt{2}})
  :=C_0 \Phi(z) & \Im
  z\geq 0\\
  \exp(\frac{1}{2}(z-\frac{b}{2})^2)\sum \widetilde{B}_n  D_{-\frac{e}{2}-c-1-n} \displaystyle(\frac{i(2z-b)}{\sqrt{2}})
  := \Psi(z) & \Im
  z\leq 0
  \end{array} \ ,\right.
  $$
  where $\widetilde{A}_n$ is given by (\ref{eq2.11})  while
  $\widetilde{B}_n$ is given by (\ref{eq2.12}) with
  $\widetilde{A}_0=\widetilde{B}_0=1$ and $\widetilde{A}_1=\widetilde{B}_1=\frac{i}{\sqrt{2}}(b
  c-d)$. Also
  $C_0={\Phi}(\frac{b}{2})/{\Psi}(\frac{b}{2})$.
  \end{theorem}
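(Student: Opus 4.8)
The plan is to realize the two series as the explicit parabolic-cylinder forms of the symmetry solutions $\phi_5$ and $\phi_4$ of Proposition \ref{T:BHE-symmetry}, to check that they converge up to and along the real axis, and then to recognize that on that common boundary they represent one and the same solution of the BHE up to a constant, so that they may be glued across $\mathbb{R}$ into an entire function. First I would record that, by Theorem \ref{T:formal-series} together with the two computations preceding this statement, $\Phi(z)=\exp(\tfrac12(z-\tfrac b2)^2)\sum\widetilde A_nD_{-\frac e2-c-1-n}(\tfrac{i(b-2z)}{\sqrt2})$ and $\Psi(z)=\exp(\tfrac12(z-\tfrac b2)^2)\sum\widetilde B_nD_{-\frac e2-c-1-n}(\tfrac{i(2z-b)}{\sqrt2})$ are genuine solutions of \eqref{E:BHE}. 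The estimate \eqref{eq2.9} and its $\phi_4$ analogue show that, once $b\in\mathbb{R}$ (so $\Im b=0$), the convergence half-planes $\{\Im z>0\}$ and $\{\Im z<0\}$ abut exactly along the real axis, while the marginal hypothesis $\Re c<-\tfrac12$ is precisely what forces absolute convergence on the common boundary $\mathbb{R}$ itself. Thus $C_0\Phi$ is holomorphic on $\{\Im z>0\}$ and continuous on $\{\Im z\ge0\}$, and $\Psi$ is holomorphic on $\{\Im z<0\}$ and continuous on $\{\Im z\le0\}$; this is exactly the role played by the two hypotheses.

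The crux is to show $C_0\Phi=\Psi$ on $\mathbb{R}$ for the stated constant, and here I would argue by uniqueness of the analytic local solution at the regular singular point $z=0$. Neither $\phi_5$ nor $\phi_4$ carries the singular prefactor $z^{-\al}$; each is $e^{\be z+z^2}$ times an index-$0$ Frobenius solution of a transformed BHE, hence holomorphic at $z=0$. Since the two Frobenius exponents of \eqref{E:BHE} at the origin are $0$ and $-\al$, the single-valued holomorphic local solution $\chi$ normalized by $\chi(0)=1$ is unique up to a scalar, so that $\Phi=\kappa_5\chi$ and $\Psi=\kappa_4\chi$ near $\mathbb{R}$; because $0$ is the only finite singular point of \eqref{E:BHE}, $\chi$ continues analytically along all of $\mathbb{R}$ and these relations persist there. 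Consequently $\Psi=C_0\Phi$ on $\mathbb{R}$, where $C_0=\kappa_4/\kappa_5$ is the ratio of the two normalizations, computed by evaluating at the convenient real point $z=b/2$ as in the statement.

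Finally I would glue. Define $y$ by $C_0\Phi$ on $\{\Im z\ge0\}$ and by $\Psi$ on $\{\Im z\le0\}$; by the previous step the two prescriptions agree on $\mathbb{R}$, so $y$ is continuous on $\mathbb{C}$ and holomorphic off $\mathbb{R}$, and Morera's theorem applied to triangles straddling the real axis makes $y$ entire. As $y$ solves \eqref{E:BHE} on each open half-plane, it solves it on all of $\mathbb{C}$, yielding the required entire solution. I expect the main obstacle to be the identification $\Psi=C_0\Phi$ on $\mathbb{R}$: uniqueness of the holomorphic local solution at $0$ is transparent only when the exponent difference $-\al$ is not a nonnegative integer, so the resonant integer values of $\al$ must be handled separately, for instance by a continuity argument in the parameters or by invoking the coincidence $\phi_4\equiv\phi_5$ directly from Proposition \ref{T:BHE-symmetry}. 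An alternative route that bypasses the Frobenius discussion is to match $C_0\Phi$ and $\Psi$ in value and first derivative at the regular point $z=b/2$, that is at $x=0$: the values agree by the choice of $C_0$, the derivatives agree from the parity of $D_\nu$ at the origin together with the shared initial data $\widetilde A_0=\widetilde B_0$ and $\widetilde A_1=\widetilde B_1$ in \eqref{eq2.11}--\eqref{eq2.12}, and uniqueness for the second-order \eqref{E:BHE} at $b/2$ then propagates the equality along $\mathbb{R}$.
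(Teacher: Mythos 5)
Your overall architecture --- identify each half-plane series with a scalar multiple of a single distinguished solution of \eqref{E:BHE} singled out by its behaviour at the origin, then glue across $\mathbb{R}$ --- is the same as the paper's, but the crux of your argument has a genuine gap. You assert that $\Phi$ (and likewise $\Psi$) ``is an index-$0$ Frobenius solution of a transformed BHE, hence holomorphic at $z=0$'' because the symmetry formulas for $\phi_4,\phi_5$ in Proposition \ref{T:BHE-symmetry} carry no prefactor $z^{-\al}$. This does not follow. The function $\Phi$ is \emph{defined} by a series that converges only on the closed half-plane $\{\Im z\ge 0\}$, of which $z=0$ is a boundary point; all that the convergence analysis yields is continuity of $\Phi$ up to $\mathbb{R}$ (this is precisely what the hypothesis $\Re c<-\tfrac12$ buys via the Weierstrass $M$-test and \eqref{eq2.9}). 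Holomorphy of $\Phi$ \emph{at} $z=0$ is not available from this, and the absence of an explicit factor $z^{-\al}$ in the symmetry transformation does not preclude the singular Frobenius component from being hidden in the infinite sum --- the identification of ``which solution'' the series represents is exactly the point at issue, so your appeal to the form of $\phi_5$ is circular. The paper closes this gap differently: by \cite[Props.\ 3.1, 3.2]{ronv}, \eqref{E:BHE} has an entire solution $N_1$ and an independent solution $N_2$ with a branch point at $z=0$; one writes $\Phi=C_1N_1+C_2N_2$ on $\{\Im z>0\}$ and uses the boundary behaviour of $\Phi$ at $z=0$ to force $C_2=0$, whence $\Phi=C_1N_1$, $\Psi=\widetilde{C}_1N_1$, and the glued function is $\widetilde{C}_1N_1$, already entire --- no Morera step and no Frobenius uniqueness are needed.

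Even granting holomorphy at $0$, your uniqueness claim fails at exactly the parameter values this paper cares about, as you yourself suspect. The exponents of \eqref{E:BHE} at $z=0$ are $0$ and $1-c$; under $\Re c<-\tfrac12$ one has $\Re(1-c)>\tfrac32$, and when $1-c$ is a positive integer with the origin an apparent singularity (the situation of Theorems \ref{T:gauge} and \ref{T:BHC-degenerate-3}) \emph{every} local solution is holomorphic at $0$, so ``holomorphic at $0$'' cuts out a two-dimensional space and gives no proportionality. Your proposed repairs do not close this case: $\phi_4$ and $\phi_5$ are distinct solutions in general, not identical; and the derivative matching at $z=b/2$ is not a consequence of ``parity of $D_\nu$'' --- since $D_\nu(0)$ and $D_\nu'(0)$ are both nonzero for generic $\nu$, what you need is the identity $-C_0\sum_n\widetilde{A}_nD_{\nu_n}'(0)=\sum_n\widetilde{B}_nD_{\nu_n}'(0)$ with $\nu_n=-\tfrac e2-c-1-n$, where $\widetilde{A}_n$ and $\widetilde{B}_n$ obey the genuinely different recursions \eqref{eq2.11} and \eqref{eq2.12}; this is an unproven statement essentially equivalent to the theorem itself. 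Finally, propagating $C_0\Phi=\Psi$ along $\mathbb{R}$ by ODE uniqueness presupposes that the boundary restrictions are $C^2$ solutions of \eqref{E:BHE} on $\mathbb{R}$, which would require convergence of the termwise-differentiated series --- an estimate you have not made. The robust route is the paper's dichotomy (entire solution versus branch-cut solution), not local Frobenius uniqueness.
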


\begin{proof}
First, by \cite[Prop 3.1]{ronv} and \cite[Prop
  3.2]{ronv}, there is an entire solution $N_1(z)$ of (\ref{E:BHE})
  as well as another linearly independent solution  $N_2(z)$ which
  has a (branch cut) singular point at $z=0$. Now $\d {\Phi}$ is
  a solution of (\ref{E:BHE}) when $\Im z>0$. So we have
  $$
  {\Phi}(z)=C_1 N_1(z)+C_2N_2(z),
  $$
  where $C_1$ and $C_2$ are constants. We claim that ${\Phi}$
  is continuous on $\{ \Im z\leq 0 \}$, so that by taking limit
  $z\to 0$, we have $C_2=0$ since ${\Phi}(0)$ is still finite.
  Thus ${\Phi}=C_1 N_1$. Similarly we also have
  ${\Psi}=\widetilde{C_1} N_1$. Therefore if we take
  $$
  y(z)=\left\{
  \begin{array}{ll}
  C_0 {\Phi}(z) & \Im
  z\geq 0\\
   {\Psi}(z) & \Im
  z\leq 0
  \end{array} \ ,\right.
  $$
  where
  $$
  C_0=\frac{\widetilde{C_1}}{C_1}=\frac{{\Phi}(\frac{b}{2})}{{\Psi}(\frac{b}{2})},
  $$
  then $y=\widetilde{C_1}N$ is an entire solution of (\ref{E:BHE}).

  So it suffices to show that $\Phi$ is continuous on $\{
  \Im z\geq 0\}$. With our restriction on $b$ and $c$, the equation
  (\ref{eq2.9}) with the discussion after it infers that the series
  $\d \sum_{n=0}^\infty \widetilde{A_n} D_{-\frac{e}{2}-c-1-n} (\frac{i(b-2z}{\sqrt{2}})$
  is absolutely and uniformly convergent on the
 half-plane $\{\Im z\geq 0\}$.  Indeed, we know that
 $$
 |\widetilde{A_n} D_{-\frac{e}{2}-c-1-n} (\frac{i b-2 i
 z}{\sqrt{2}})|\leq \rme^C n^{\frac{c}{2}-\frac{3}{4}},
 $$
 where $\d \Re(\frac{2c-3}{4})<-1$, or $\d \Re c<-\frac{1}{2}$.  Since $\sum n^{\frac{2c-3}{4}}$ is
 convergent, by Weierstrass $M$-test, we conclude that the above
 mentioned series is absolutely and uniformly convergent on the set
 $\{ \Im z\geq 0\}$.  This implies directly that the resulted function ${\Phi}$
 is continuous on the domain $\{ \Im z\geq 0\}$. The case of
 ${\Psi}$ is similar.
 \end{proof}

 \section{Concluding remarks} 
 \setcounter{equation}{0} 
 
 We have made a systematic study of the degenerate  monodromy and Stokes' data of the BHE \eqref{E:BHEC} and BHC \eqref{E:BHC} in this paper.
 In particular, we have
  demonstrated that there is a new complete correspondence between the parameter space of the BHC and that of thier isomondromy deformation counterparts, namely, the Painlev\'e IV equation. More precisely, we have identified if the parameters $\eta,\, \xi$ coincide with the affine Weyl group $\widetilde{A}_2^{(1)}$ discovered by Okamoto, i.e., $\eta$ and $\xi$ in \eqref{E:P4} satisfy $\eta=-2(2n+1+\varepsilon \xi)^2$ and/or $\eta=-2n^2,\ 	n\in\mathbb{Z}$ where $\varepsilon=\pm 1$, then the monodromy/Stokes' data $\theta_0$ and $\theta_\infty$, defined by Jimbo and Miwa \cite{JM_1981b}, satisfy $\theta_0\pm\theta_\infty\in\mathbb{Z}$ and/or $2\theta_0\in \mathbb{Z}$.  The converse of the above statement also holds. The former relation corresponds to the BHC with appropriate choices of accessory parameters, having Liouvillian solutions in the language of differential Galois theory, while the latter corresponds to $x=0$ reduces to an apparent singularity. Moreover, we have demonstrated that the BHC \eqref{E:BHC} can be transformed from a parabolic connection \eqref{E:parabolic-connection} in this latter case.  We have also derived explicit solutions for the BHE \eqref{E:BHE_JM}  in the forms of \eqref{E:hautot_sum_0} after choosing appropriate accessory parameters $\lambda$ that are counterparts of special (function/rational) solutions of $\mathrm{P_{IV}}$ \textit{except at three cases/integers} $n$. In particular, both can be written as finite combinations of parabolic cylinder functions. The BHE \eqref{E:BHE_JM} admits eigen-solution of the form
  	\[
		y(x)=e^{x^2/4}\sum_{k=0}^N A_k D_{(\theta_\infty-\theta_0-1)-k}(x),\quad x=(b-2z)/\sqrt{2}.
	\]
We have developed a theory of invariant subspaces spanned by $e^{x^2/4} D_\nu(x)$ and $e^{x^2/4} E_\nu(x)$ and their derivatives so that these explicit solutions \eqref{E:hautot_sum_0} derived above are eigen-solutions.

We have extended the finite sum above to an infinite sum hence obtaining more general solution
	\begin{equation}\label{E:finite-sum}
		y(x)=e^{x^2/4}\sum_{k=0}^\infty A_k D_{(\theta_\infty-\theta_0-1)-k}(x),\quad x=(b-2z)/\sqrt{2}.
	\end{equation}
to the BHE \eqref{E:BHE_JM} that is converging in an half-plane. We need to apply the latest asymptotic result of solutions to second order difference equations of Wong and Li \cite{wong} in order to compute for the asymptotics of the coefficients $A_k$.

Although we have demonstrated that the BHC and $\mathrm{P_{IV}}$ share the same parameter space for their degenerations, the actual algebraic structure apart from symmetry of BHC is a problem for future consideration. Another problem is to derive closed form expressions for the $A_k$ involved in the above finite sums. It is now clear that this problem is equivalent to finding closed form expressions for the corresponding Schlesinger transformations $\theta_0\mapsto \theta_0+n$ and $\theta_\infty\mapsto \theta_\infty+n$. Finally, we mentioned that we have also found orthogonality relations exist amongst the eigen-solutions \eqref{E:finite-sum}. We shall address to the above issues in the near future.

\section*{Appendix A: BHC and BHE}  
 \setcounter{equation}{0} 
 
 The following formula records a relation between a differential equation in system form to its scalar form.
\begin{proposition}\label{P:system_scalar}
Let $A=(a_{ij})_{1\leq i,j\leq 2}$ be a matrix-valued holomorphic function. If $Y=(y_1,y_2)^T$ is a vector-valued function satisfying $Y'=AY$. Then,
	\begin{equation}\label{E:conversion}
y''_1+[-a_{11}-a_{22}-\dfrac{a'_{12}}{a_{12}}]y'_1+[a_{11}a_{22}-a_{21}a_{12}-a_{12}(\dfrac{a_{11}}{a_{12}})']y_1=0.
	\end{equation}
Moreover,
	\begin{equation}\label{E:conversion_2}
		y_2=\frac{1}{a_{12}}\big( y_{1}^\prime-a_{11}y_{1}\big).
	\end{equation}		
\end{proposition}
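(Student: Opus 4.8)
The plan is to eliminate the second component $y_2$ from the first-order system $Y' = AY$ and thereby isolate a single second-order scalar equation for $y_1$. Writing the system componentwise as $y_1' = a_{11}y_1 + a_{12}y_2$ and $y_2' = a_{21}y_1 + a_{22}y_2$, I would first impose the standing hypothesis $a_{12}\not\equiv 0$ (which is precisely what makes the quantities $a_{12}'/a_{12}$ and $(a_{11}/a_{12})'$ appearing in \eqref{E:conversion} meaningful) and solve the first equation algebraically for $y_2$. This step already delivers the auxiliary formula \eqref{E:conversion_2}, namely $y_2 = a_{12}^{-1}(y_1' - a_{11}y_1)$.

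Next I would differentiate \eqref{E:conversion_2} to express $y_2'$ in terms of $y_1$, $y_1'$, and $y_1''$, applying the product rule to the coefficient functions $1/a_{12}$ and $a_{11}/a_{12}$. Substituting this expression for $y_2'$, together with \eqref{E:conversion_2} for $y_2$ itself, into the second equation $y_2' = a_{21}y_1 + a_{22}y_2$ yields an identity involving only $y_1$ and its derivatives. I would then multiply through by $a_{12}$ to clear denominators and invoke the simplification $a_{12}(1/a_{12})' = -a_{12}'/a_{12}$.

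Collecting powers of the derivative of $y_1$, the coefficient of $y_1''$ reduces to $1$, the coefficient of $y_1'$ becomes $-a_{11} - a_{22} - a_{12}'/a_{12}$, and the coefficient of $y_1$ becomes $a_{11}a_{22} - a_{21}a_{12} - a_{12}(a_{11}/a_{12})'$. These are exactly the coefficients displayed in \eqref{E:conversion}, which completes the derivation.

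Since every step is elementary Gaussian elimination followed by a single differentiation, there is no genuine analytic difficulty to overcome. The only care required is correct bookkeeping of the product-rule terms $(1/a_{12})'$ and $(a_{11}/a_{12})'$, together with the hypothesis $a_{12}\not\equiv 0$ that guarantees both the reduction and \eqref{E:conversion_2} are well defined. Thus the main (and essentially cosmetic) obstacle is organizational: grouping the derivative terms so that the resulting $y_1$-coefficient appears in the stated form $a_{11}a_{22} - a_{21}a_{12} - a_{12}(a_{11}/a_{12})'$ rather than in an algebraically equivalent but differently arranged expression.
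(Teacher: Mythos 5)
Your proposal is correct and is exactly the standard elimination the paper has in mind: the paper's own proof is simply ``Direct calculation,'' and your reduction (solve the first component equation for $y_2$, differentiate, substitute into the second, clear denominators via $a_{12}(1/a_{12})'=-a_{12}'/a_{12}$) reproduces both \eqref{E:conversion_2} and the displayed coefficients of \eqref{E:conversion} faithfully. The only point worth noting is the hypothesis $a_{12}\not\equiv 0$, which you correctly flag and which the paper leaves implicit.
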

\begin{proof}
Direct calculation.
\end{proof}

\section*{Appendix B: Parabolic cylinder functions} 

 We list some formulae for parabolic cylinder functions in the convention as adopted by Whittaker. Our primary references are \cite[Chap. VIII]{erdelyi} and \cite[Chap. 19]{AS}. Let*\footnote{*See either \cite[\S8.2 (4)]{erdelyi} or \cite[19.3.1]{AS}}
	\begin{equation*}
		\begin{split}
		D_\nu(x) &= 2^{\nu/2} e^{-x^2/4}\Big[
		\cos\big(\frac{\pi \nu}{2}\big) \Gamma\big( \frac{\nu+1}{2}\big)
		{}_1\mathrm{F}_1\big(-\frac{\nu}{2},\, \frac12;\, \frac{x^2}{2}\big)\\
		&
		\qquad +
		\sin\big(\frac{\nu+1}{2}\big)\pi \Gamma\big(\frac{\nu}{2}+1\big)
		x\, {}_1\mathrm{F}_1\big(-\frac{\nu}{2}+\frac12,\, \frac32;\, \frac{x^2}{2}\big)\Big]
		\end{split}
	\end{equation*}
We have \cite[p. 119]{erdelyi}:
	\begin{equation}\label{A:bateman_1}
		\frac{d^k}{dx^k}\big[ e^{x^2/4} D_\nu(x)\big]=(-1)^k(-\nu)_k\, e^{x^2/4} D_{\nu-k}(x),
	\end{equation}
and
	\begin{equation}\label{A:bateman_2}
		\frac{d^k}{dx^k}\big[ e^{-x^2/4} D_\nu(x)\big]=(-1)^k e^{-x^2/4} D_{\nu+k}(x),
	\end{equation}
$k=1,\, 2,\, 3,\, \cdots$. We can derive the following recursion formulae from the above two differential identities:
\begin{eqnarray}
  2 D_\nu'(x)&=&-D_{\nu+1}(x)+\nu D_{\nu}(x),\label{E:derived_parabolic_1}\\
  x D_\nu(x) &=& D_{\nu+1}(x)+\nu D_{\nu-1}(x)\label{E:derived_parabolic_2}.
  \end{eqnarray}

 It is known from \cite[p. 123 (5)]{erdelyi} that for $\d |\arg(-\nu)|\leq
 \frac{\pi}{2}$,
	\begin{equation}\label{E:parabolic-asymptote}
 D_\nu(x)=\frac{1}{\sqrt{2}}\exp\big[
 \frac{\nu}{2}\log(-\nu)-\frac{\nu}{2}-\sqrt{-\nu}x\big]\,
 \big(1+O(|\nu|^{-1/2})\big),
	\end{equation}
holds.

Let
	\begin{equation}\label{E:parabolic_2nd_soln}
		V(x;\, -\nu-\frac12 ):= \frac{1}{\sin \pi\nu} \Big[ \sin \pi\big(\nu+\frac12\big)\, D_{-\nu}(x)-D_{-\nu}(-x)\Big]
	\end{equation}
where $E_{-a-\frac12}(x):=\Gamma\big(\frac12-a\big) V(a,\, x)$ as in \cite[19.3.8]{AS} serves as a second linearly independent solution to \eqref{E:parabolic} that also satisfies the formulae \eqref{A:bateman_1}, \eqref{A:bateman_2}, \eqref{E:derived_parabolic_1} and  \eqref{E:derived_parabolic_1}. See e.g. \cite[19.6]{AS}.

\section*{\bf Acknowledgements}
This work was supported in part by the GRF (No.: 16300814) of the Hong Kong Research Grant Council, Postdoctoral Development Fund of HKUST,
 National Natural Science
Foundation of China under Grant No.  11871336, as well as
Ministry of Science and Technology, Taiwan under Grant No. MOST104-2115-M-110-008. Moreover, the authors would like to express their thanks for an inspiring conversation with Ph. Boalch, and valuable conversations with Avery Ching and Chiu-Yin Tsang and for their interests to our paper. \vskip .5cm

\end{document}